\newcommand{\bydef}{:=}
\newcommand{\cA}{\mathcal{A}}
\newcommand{\calA}{\mathcal{A}}
\newcommand{\cB}{{\mathcal B}}
\newcommand{\calB}{{\mathcal B}}
\newcommand{\cC}{\mathcal{C}}
\newcommand{\cD}{{\mathcal D}}
\newcommand{\calC}{\mathcal{C}}
\newcommand{\calD}{\mathcal{D}}
\newcommand{\calJ}{\mathcal{J}}
\newcommand{\calL}{\mathcal{L}}
\newcommand{\calT}{\mathcal{T}}
\newcommand{\calW}{\mathcal{W}}
\newcommand{\calH}{{\mathcal H}}
\newcommand{\frg}{{\mathfrak g}}
\newcommand{\frh}{{\mathfrak h}}
\newcommand{\fra}{{\mathfrak a}}
\newcommand{\frb}{{\mathfrak b}}
\newcommand{\espan}[1]{\mathrm{span}\left\{#1\right\}}
\newcommand{\trace}{\mathrm{trace}}
\newcommand{\ZZ}{\mathbb{Z}}
\newcommand{\bZ}{{\mathbb Z}}
\newcommand{\QQ}{\mathbb{Q}}
\newcommand{\RR}{\mathbb{R}}
\newcommand{\bH}{{\mathbb H}}
\newcommand{\bK}{{\mathbb K}}
\newcommand{\OO}{\mathbb{O}}
\newcommand{\bO}{{\mathbb O}}
\newcommand{\bA}{{\mathbb A}}
\newcommand{\FF}{\mathbb{F}}
\newcommand{\bF}{{\mathbb F}}
\DeclareMathOperator{\Hom}{\mathrm{Hom}}
\DeclareMathOperator{\Mat}{\mathrm{Mat}}
\DeclareMathOperator{\Str}{\mathrm{Str}}
\DeclareMathOperator{\End}{\mathrm{End}}
\DeclareMathOperator{\Aut}{\mathrm{Aut}}
\DeclareMathOperator{\Der}{\mathrm{Der}}
\DeclareMathOperator{\supp}{\mathrm{Supp}\,}
\DeclareMathOperator{\Cent}{\mathrm{Cent}}
\DeclareMathOperator{\tor}{\mathrm{tor}}
\DeclareMathOperator{\rank}{\mathrm{rank}}
\newcommand{\ad}{\mathrm{ad}}
\newcommand{\frsl}{{\mathfrak{sl}}}
\newcommand{\frso}{{\mathfrak{so}}}
\newcommand{\frs}{{\mathfrak{s}}}
\newtheorem{theorem}{Theorem}[section]
\newtheorem{proposition}[theorem]{Proposition}
\theoremstyle{definition}
\newtheorem{df}[theorem]{Definition}
\theoremstyle{remark}
\newtheorem{remark}[theorem]{Remark}
\newenvironment{romanenumerate}
 {\begin{enumerate}
 
 }{\end{enumerate}}
\numberwithin{equation}{section}
\begin{document}

\title[Fine gradings and gradings by root systems]{Fine gradings and gradings by root systems\\
on simple Lie algebras}

%

\author[Alberto Elduque]{Alberto Elduque$^{\star}$}
\thanks{$^{\star}$ Supported by the Spanish
Ministerio de Econom\'{\i}a y Competitividad and
FEDER (MTM2010-18370-C04-02) and by
the Diputaci\'on General de Arag\'on---Fondo Social
Europeo (Grupo de Investigaci\'on de \'Algebra)}

\address{Departamento de Matem\'aticas e
Instituto Universitario de Matem\'aticas y Aplicaciones,
Universidad de Zaragoza, 50009 Zaragoza, Spain}
\email{elduque@unizar.es}




\begin{abstract}
Given a fine abelian group grading $\Gamma\,\colon\, \calL=\bigoplus_{g\in G}\calL_g$ of a finite dimensional simple Lie algebra over an algebraically closed field of characteristic zero, with $G$ being the universal grading group, it is shown that the induced grading by the free group $G/\tor(G)$ on $\calL$ is a grading by a (not necessarily reduced) root system.

Some consequences for the classification of fine gradings on the exceptional simple Lie algebras are drawn.
\end{abstract}

\maketitle

\setlength{\unitlength}{1mm}


\section{Introduction}

Gradings by abelian groups on simple Lie algebras appear in many instances. A systematic study of these gradings was started in \cite{PZ}. For the classical simple Lie algebra over an algebraically closed field of characteristic $0$, the fine gradings were classified in \cite{E10}. For the exceptional simple algebras they were classified in \cite{CristinaCandido_G2} and \cite{BahturinTvalavadze} for $G_2$, in \cite{CristinaCandido_F4} for $F_4$ and in \cite{CristinaViru} for $E_6$.

On the other hand, gradings by root systems were introduced by Berman and Moody in \cite{BermanMoody}, who used them as tools to study some classes of infinite-dimensional Lie algebras.

The goal of this paper is to relate both types of gradings. It will be shown that any fine grading with infinite universal grading group on a simple finite-dimensional Lie algebra over an algebraically closed field of characteristic $0$ induces a grading by a (possibly not reduced) root system. Some consequences for the classification of fine gradings in the exceptional cases will be derived too.

The first two sections will review the gradings by abelian groups and gradings by root systems respectively. The main result connecting fine gradings and gradings by root systems will be proved in the next two sections. This result shows that any fine grading is determined by a grading by a root system and a special grading on the coordinate algebra of the root grading. This grading on the coordinate algebra is studied in Section 5. The last section is devoted to draw consequences for the classification of the fine gradings on the simple exceptional simple Lie algebras.


\section{Gradings}

Let $\cA$ be an algebra (not necessarily associative) over a field $\FF$ and let $G$ be an abelian group (written additively).

\begin{df}\label{df:G_graded_alg}
A {\em $G$-grading} on $\cA$ is a vector space decomposition
\[
\Gamma\,\colon\,\cA=\bigoplus_{g\in G} \cA_g
\]
such that
\[
\cA_g \cA_h\subset \cA_{g+h}\quad\mbox{for all}\quad g,h\in G.
\]
If such a decomposition is fixed, we will refer to $\cA$ as a {\em $G$-graded algebra}.
The nonzero elements $a\in\cA_g$ are said to be {\em homogeneous of degree $g$}; we will write $\deg a=g$.
The {\em support} of $\Gamma$ is the set $\supp\Gamma\bydef\{g\in G\;|\;\cA_g\neq 0\}$.
\end{df}

Let
\[
\Gamma\,\colon\, \cA=\bigoplus_{g\in G} \cA_g\quad\text{and}\quad\Gamma'\,\colon\,\cB=\bigoplus_{h\in H} \cB_h
\]
be two gradings on algebras, with supports $S$ and $T$, respectively.

\begin{df}\label{df:equivalence}
We say that $\Gamma$ and $\Gamma'$ are {\em equivalent} if there exists an isomorphism of algebras $\psi\colon\cA\to\cB$ and a bijection $\alpha\colon S\to T$ such that $\psi(\cA_s)=\cB_{\alpha(s)}$ for all $s\in S$. Any such $\psi$ will be called an {\em equivalence} of $\Gamma$ and $\Gamma'$ (or of $\cA$ and $\cB$ if the gradings are clear from the context).
\end{df}

Given a group grading $\Gamma$ on an algebra $\cA$, there are many groups $G$ such that $\Gamma$, regarded as a decomposition into a direct sum of subspaces such that the product of any two of them lies in a third one, can be realized as a $G$-grading, but there is one distinguished group among them \cite{PZ}.

\begin{df}\label{df:universal_group}
Suppose that $\Gamma$ admits a realization as a $U$-grading for some abelian group $U$. We will say that $U$ is a {\em universal group of $\Gamma$} if, for any other realization of $\Gamma$ as a $G$-grading, there exists a unique homomorphism $U\to G$ that restricts to identity on $\supp\Gamma$.
\end{df}

One shows that the universal group, which we denote by $U(\Gamma)$, exists and depends, up to isomorphism, only on the equivalence class of $\Gamma$. Indeed, $U(\Gamma)$ is generated by $S=\supp\Gamma$ with defining relations $s_1+s_2=s_3$ whenever $0\ne\cA_{s_1}\cA_{s_2}\subset\cA_{s_3}$ ($s_i\in S$).

Given a $G$-grading $\Gamma\,\colon\,\cA=\bigoplus_{g\in G}\cA_g$ and a group homomorphism $\alpha\colon G\to H$, we obtain the induced $H$-grading ${}^\alpha\Gamma\,\colon\,\cA=\bigoplus_{h\in H}\cA'_h$ by setting $\cA'_h=\bigoplus_{g\in\alpha^{-1}(h)}\cA_g$.

\begin{df}\label{df:refinement_fine}
Given gradings $\Gamma\,\colon\,\cA=\bigoplus_{g\in G}\cA_g$ and $\Gamma'\,\colon\,\cA=\bigoplus_{h\in H}\cA'_h$, we say that $\Gamma'$ is a {\em coarsening} of $\Gamma$, or that $\Gamma$ is a {\em refinement} of $\Gamma'$, if for any $g\in G$ there exists $h\in H$ such that $\cA_g\subset\cA'_h$. The coarsening (or refinement) is said to be {\em proper} if the inclusion is proper for some $g\in \supp\Gamma$. (In particular, ${}^\alpha\Gamma$ is a coarsening of $\Gamma$, which is not necessarily proper.) A grading $\Gamma$ is said to be {\em fine} if it does not admit a proper refinement.
\end{df}

Any $G$-grading on a finite-dimensional algebra $\cA$ is induced from some fine grading $\Gamma$ by a homomorphism $\alpha\colon U(\Gamma)\to G$.

Over algebraically closed fields of characteristic zero, the classification of fine gradings on $\cA$ up to equivalence is the same as the classification of maximal diagonalizable subgroups (i.e., maximal quasitori) of $\Aut(\cA)$ up to conjugation (see e.g. \cite{PZ}). More precisely, given a grading $\Gamma$ on the algebra $\cA$ with universal group $G$, let $\hat G$ be its group of characters (homomorphisms $G\rightarrow \FF^\times$). Any $\chi\in \hat G$ acts as an automorphism of $\cA$ by means of $\chi.x=\chi(g)x$ for any $g\in G$ and $x\in \cA_g$. This allows us to identify $\hat G$ with a quasitorus (the direct product of a torus and a finite subgroup) of the algebraic group $\Aut(\cA)$. Conversely, given a quasitorus $Q$ of $\Aut(\cA)$, $Q=\hat G$ for $G$ the group of homomorphisms (as algebraic groups) $Q\rightarrow \FF^\times$. Then $Q$ induces a $G$-grading of $\cA$, where $\cA_g=\{x\in \cA: \chi(x)=g(\chi)x\}$ for any $g\in G$. In this way \cite{PZ}, the fine gradings on $\cA$, up to equivalence, correspond to the conjugacy classes in $\Aut(\cA)$ of the maximal quasitori (or maximal abelian diagonalizable subgroups) of $\Aut(\cA)$.

Fine gradings on simple Lie algebras belonging to the series $A$, $B$, $C$ and $D$ (including $D_4$) were classified in \cite{E10}. The fine gradings on the simple Lie algebra of type $G_2$ were classified in \cite{CristinaCandido_G2,BahturinTvalavadze}, for type $F_4$ in \cite{CristinaCandido_F4} (see also \cite{EK}), and for type $E_6$ in \cite{CristinaViru}.

\begin{df}\label{df:definitions_gradings}
Let $\Gamma\,\colon\,\cA=\bigoplus_{g\in G}\cA_g$ be a grading on the algebra $\cA$.
\begin{itemize}
\item A subspace $\cB$ of $\cA$ is said to be \emph{graded} if $\cB=\bigoplus_{g\in G}(\cB\cap\cA_g)$. (Equivalently, $\calB$ is graded by $G$ with $\cB_g=\cB\cap\cA_g$ for any $g\in G$.)
\item The \emph{type} of $\Gamma$ is the $r$-tuple $(n_1,\ldots,n_r)$, where $r=\max\{\dim\cA_g: g\in G\}$ and $n_i$ is the number of homogeneous components of dimension $i$, for $i=1,\ldots,r$. Hence $\dim\cA=\sum_{i=1}^rin_i$.
\end{itemize}
\end{df}

\smallskip

From now on, \emph{the ground field $\FF$ will be assumed to be algebraically closed of characteristic zero.}


\section{Gradings by root systems}

Berman and Moody \cite{BermanMoody} started the study of Lie algebras graded by root systems $\Phi$. (See \cite{ABG} and the references therein.)

\begin{df}\label{df:reduced_root_graded}
A Lie algebra $\calL$ over $\FF$ is \emph{graded by the reduced root system $\Phi$}, or  $\Phi$-graded, if:
\begin{romanenumerate}
\item $\calL$ contains as a subalgebra a finite-dimensional simple Lie algebra $\frg=\frh\oplus\bigl(\bigoplus_{\alpha\in\Phi}\frg_\alpha\bigr)$ whose root system is $\Phi$ relative to a Cartan subalgebra $\frh=\frg_0$;
\item $\calL=\bigoplus_{\alpha\in\Phi\cup\{0\}}\calL(\alpha)$, where $\calL(\alpha)=\{X\in\calL: [H,X]=\alpha(H)X\ \text{for all}$ $H\in\frh\}$; and
\item $\calL(0)=\sum_{\alpha\in\Phi}[\calL(\alpha),\calL(-\alpha)]$.
\end{romanenumerate}
The subalgebra $\frg$ is said to be a \emph{grading subalgebra} of $\calL$.
\end{df}

Berman and Moody \cite{BermanMoody} studied the simply laced case (types $A_r$, $D_r$ and $E_r$), and Benkart and Zelmanov \cite{BZ} considered the remaining  cases.

Under the adjoint action of $\frg$, a $\Phi$-graded Lie algebra $\calL$ decomposes as a sum of finite-dimensional irreducible $\frg$-modules whose highest weights are the highest long root, highest short root, or $0$. By collecting isomorphic summands into ``isotypic components'', we may assume that there are $\FF$-vector spaces $\calA$, $\calB$ and $\calD$ such that
\begin{equation}\label{eq:reduced_isotypic}
\calL=(\frg\otimes\calA)\oplus(\calW\otimes\calB)\oplus\calD,
\end{equation}
where the grading subalgebra $\frg$ is identified with $\frg\otimes 1$ for a distinguished element $1\in \calA$; $\calW$ is $0$ if $\frg$ is of type $A_r$ ($r\geq 1$), $D_r$ ($r\geq 4$), or $E_r$ ($r=6,7,8$), while $\calW$ is the irreducible $\frg$-module whose highest weight is the highest short root if $\frg$ is of type $B_r$ ($r\geq 2$), $C_r$ ($r\geq 3$), $F_4$ or $G_2$; and $\calD$ is the centralizer of $\frg\simeq \frg\otimes 1$, and hence it is a subalgebra of $\calL$.

The problem of classifying the $\Phi$-graded Lie algebras reduces to one of determining the possibilities for $\calA$, $\calB$ and $\calD$, and of finding the multiplication. The bracket in $\calL$ is invariant under the adjoint action of $\frg$ and this gives the sum $\fra=\calA\oplus\calB$ the structure of a unital algebra. Besides, $\calD$ acts as derivations on $\fra$, with $\calA$ and $\calB$ being invariant under this action. The type of the algebra $\fra$ depends on the root system $\Phi$. This algebra $\fra$ is called the \emph{coordinate algebra} of $\calL$.

For instance (see \cite{BZ}), assume that $\Phi$ is the root system of type $G_2$. Then $\frg$ is the Lie algebra of type $G_2$, which can be identified with the Lie algebra of derivations of the Cayley (or octonion algebra) $\OO$, and $\calW$ can be identified with the subspace of trace zero octonions $\OO_0$. The Cayley algebra is endowed with a nondegenerate quadratic form $n$ (the norm) such that any element $w$ satisfies $w^2-t(w)w+n(w)1=0$, where $t(w)=n(w,1)\bydef n(w+1)-n(w)-1$.

Moreover, one has the following properties:
\begin{enumerate}
\item $\Hom_\frg(\frg\otimes\frg,\frg)$ is spanned by the bracket,
\item $\Hom_\frg(\frg\otimes\frg,\FF)$ is spanned by the Killing form $\kappa$, which is a scalar multiple of the trace form relative to the representation provided by $\calW$.
\item $\Hom_\frg(\frg\otimes\calW,\calW)$ is spanned by the action of $\frg$ on $\calW$ ($X\otimes W\mapsto X.W$),
\item $\Hom_\frg(\calW\otimes \calW,\frg)$ is spanned by the map $w_1\otimes w_2\mapsto D_{w_1,w_2}$, where $D_{w_1,w_2}(w)= [[w_1,w_2],w]+3((w_1w)w_2-w_1(ww_2))$,
\item $\Hom_\frg(\calW\otimes\calW,\calW)$ is spanned by the bracket (inside $\OO$) $w_1\otimes w_2\mapsto [w_1,w_2]=w_1w_2-w_2w_1$.
\item $\Hom_\frg(\calW\otimes\calW,\FF)$ is spanned by the trace form $w_1\otimes w_2\mapsto t(w_1w_2)$.
\item $\Hom_\frg(\frg\otimes\frg,\calW)$, $\Hom_\frg(\frg\otimes\calW,\frg)$ and $\Hom_\frg(\frg\otimes\calW,\FF)$ are trivial.
\end{enumerate}

Therefore, the bracket in $\calL$ is given by:
\begin{equation}\label{eq:G2_graded}
\begin{split}
[D\otimes a,D'\otimes a']&=[D,D']\otimes a\cdot a'+\kappa(D,D')\langle a\vert a'\rangle,\\
[D\otimes a,w\otimes b]&=D(w)\otimes a\cdot b,\\
[d,D\otimes a]&=D\otimes d(a),\\
[d,w\otimes b]&=w\otimes d(b),\\
[w\otimes b,w'\otimes b']&=D_{w,w'}\otimes (b\vert b')+[w,w']\otimes b\circ b'+2t(w_1w_2)\langle b\vert b'\rangle,
\end{split}
\end{equation}
for any $D,D'\in\frg$, $w,w'\in\calW$, $a,a'\in \calA$, $b,b'\in \calB$ and $d,d'\in \calD$, and
for linear maps
\begin{itemize}
\item $\calA\otimes\calA\rightarrow \calA: a\otimes a'\mapsto a\cdot a'$, which is symmetric,
\item $\calA\otimes\calA\rightarrow \calD: a\otimes a'\mapsto \langle a\vert a'\rangle$, which is skew-symmetric,
\item $\calB\otimes\calB\rightarrow\calA: b\otimes b'\mapsto (b\vert b')$, which is symmetric,
\item $\calB\otimes\calB\rightarrow\calB: b\otimes b'\mapsto b\circ b'$, which is symmetric,
\item $\calB\otimes\calB\rightarrow \calD: b\otimes b'\mapsto \langle b\vert b'\rangle$, which is skew-symmetric,
\item $\calA\otimes\calB\rightarrow \calB: a\otimes b\mapsto a\cdot b$.
\end{itemize}
These linear maps satisfy the following properties:
\begin{enumerate}
\item $\calA$ is a unital commutative algebra with the product $a\cdot a'$,
\item $\fra=\calA\oplus\calB$ with the multiplication given by
\[
(a+b)\cdot(a'+b')=\bigl(a\cdot a'+(b\vert b')\bigr)+\bigl(a\cdot b'+a'\cdot b+b\circ b'\bigr),
\]
    for $a,a'\in\calA$ and $b,b'\in\calB$ is a Jordan algebra over $\calA$ with normalized trace given by $\trace(a+b)=a$, which satisfies the Cayley-Hamilton equation of degree $3$.
\item The action of $\calD$ on $\fra=\calA\oplus\calB$ is an action by derivations. Moreover, $\langle a\vert a'\rangle (\fra)=0=\langle b\vert b'\rangle(\calA)$ and $\langle b'\vert b''\rangle(b)=b'\cdot(b''\cdot b)-b''\cdot(b'\cdot b)$, for $a,a'\in\calA$ and $b,b',b''\in\calB$.
\item $\calD=\langle\calA\vert\calA\rangle +\langle\calB\vert\calB\rangle$. (This is imposed by condition (iii) in Definition \ref{df:reduced_root_graded})
\end{enumerate}

Therefore, in this case, the coordinate algebra $\fra$ is a Jordan algebra ``of degree $3$'' over the unital commutative associative algebra $\calA$ (see \cite{BZ}).

Note that $\langle \calA\vert\calA\rangle$ is a central ideal of $\calL$, so if $\calL$ is simple, then this is trivial, and hence $\calD=\langle \calB\vert\calB\rangle$.

\smallskip

Gradings by nonreduced root systems (type $BC_r$) will also appear attached to fine gradings. Following \cite{ABG} we recall the next definition:

\begin{df}\label{df:nonreduced_root_graded}
Let $\Phi$ be the nonreduced root system $BC_r$ ($r\geq 1$). A Lie algebra $\calL$ over $\FF$ is \emph{graded by $\Phi$}, or $\Phi$-graded, if:
\begin{romanenumerate}
\item $\calL$ contains as a subalgebra a finite-dimensional simple Lie algebra $\frg=\frh\oplus\bigl(\bigoplus_{\alpha\in\Phi'}\frg_\alpha\bigr)$ whose root system $\Phi'$ relative to a Cartan subalgebra $\frh=\frg_0$ is the reduced subsystem of type $B_r$, $C_r$ or $D_r$ contained in $\Phi$;
\item $\calL=\bigoplus_{\alpha\in\Phi\cup\{0\}}\calL(\alpha)$, where $\calL(\alpha)=\{X\in\calL: [H,X]=\alpha(H)X\,\ \text{for all}$ $H\in\frh\}$; and
\item $\calL(0)=\sum_{\alpha\in\Phi}[\calL(\alpha),\calL(-\alpha)]$.
\end{romanenumerate}
Again, the subalgebra $\frg$ is said to be a \emph{grading subalgebra} of $\calL$, and $\calL$ is said to be $BC_r$-graded with grading subalgebra of type $X_r$, where $X_r$ is the type of $\frg$.
\end{df}

Only $BC_r$-graded subalgebras of type $B_r$ will show up related to fine gradings on simple Lie algebras.

For $r\geq 3$, let $\calW$ be the natural module for the Lie algebra $\frg$ of type $B_r$. Thus $\calW$ is endowed with a symmetric nondegenerate bilinear form $(.\vert.)$, and
\[
\begin{split}
\frg&=\{x\in\End_\FF(\calW): (xu\vert v)=-(u\vert xv)\ \text{for all}\ u,v\in \calW\},\\
\frs&=\{s\in \End_\FF(\calW): (su\vert v)=(u\vert sv)\ \text{for all}\ u,v\in \calW\ \text{and}\ \trace(s)=0\}.
\end{split}
\]
In this case, a $BC_r$-graded subalgebra of type $B_r$ can be described, up to isomorphism, as follows (see \cite[(1.30)]{ABG}):
\begin{equation}\label{eq:BCr_isotypic}
\calL=(\frg\otimes\calA)\oplus(\frs\otimes\calB)\oplus(\calW\otimes\calC)\oplus\calD,
\end{equation}
The bracket in $\calL$ gives $\frb=\calA\oplus\calB\oplus\calC$ the structure of an algebra, which is termed the \emph{coordinate algebra} of $\calL$. Moreover (see \cite{ABG} for details), for $r\geq 3$ we have:
\begin{itemize}
\item The sum $\fra=\calA\oplus\calB$ is a unital associative algebra (multiplication denoted by $\alpha\cdot\alpha'$), with $1\in \calA$ (the subalgebra $\frg$ is identified with $\frg\otimes 1$), with involution $\eta$ whose subspace of symmetric elements is $\calA$ and whose subspace of skew-symmetric elements is $\calB$.
\item The space $\calC$ is an associative left $\fra$-module (action denoted by $\alpha\cdot c$, and it is equipped with a hermitian form $\xi$ relative to $\eta$, such that the multiplication in $\frb$ is given by:
    \[
    (\alpha +c)\cdot(\alpha'+c')=\bigl(\alpha\cdot\alpha'+\xi(c,c')\bigr)+\bigl(\alpha\cdot c'+\alpha'^\eta\cdot c\bigr).
    \]
\end{itemize}

For $r=2$, the grading subalgebra $\frb=\calA\oplus\calB\oplus\calC$ is a bit more involved, and can be described in terms of structurable algebras. (See \cite{ABG} for details.)

For $r=1$, a $BC_1$-graded subalgebra of type $B_1$ can be described, up to isomorphism, as follows:
\begin{equation}\label{eq:BC1_isotypic}
\calL=(\frg\otimes\calA)\oplus(\frs\otimes\calB)\oplus\calD.
\end{equation}
Here the natural module $\calW$ for the simple Lie algebra $\frg$ (isomorphic to $\frsl_2(\FF)$) of type $B_1$ is three-dimensional, and hence isomorphic to the adjoint module $\frg$, and the subspace of symmetric trace zero endomorphisms $\frs$ is the five-dimensional irreducible module for $\frg$.

In this case, results of Allison \cite{Allison_Models_Isotropic} give that the coordinate algebra $\fra=\calA\oplus\calB$ is a structurable algebra whose involution is given by $(a+b)^\eta=a-b$ (so $\calA$ is the subspace of symmetric elements and $\calB$ the subspace of skew-symmetric elements), and the quotient of $\calL$ by its center $Z(\calL)$ is the Tits-Kantor-Koecher Lie algebra constructed from the structurable algebra $(\fra,\eta)$. (See \cite[Theorem 2.6]{Benkart_Smirnov}.)

The arguments used in the proof of \cite[Theorem 7.5]{EO_Tits} give a more precise picture in this situation. The Lie bracket in $\calL$, which is invariant under the action of the subalgebra $\frg\simeq \frg\otimes 1$, is given by:

\begin{itemize}

\item $\calD$ is a subalgebra of $\calL$,

\smallskip

\item $[A\otimes a,B\otimes b]=[A,B]\otimes a\circ b\, -\,
\bigl(AB+BA-\frac{2}{3}\trace(AB)I_3\bigr)\otimes\frac{1}{2}[a,b]\,
+\frac{1}{2}\trace(AB)\langle a\vert b\rangle$,

\smallskip

\item $[A\otimes a,X\otimes x]=-(AX+XA)\otimes \frac{1}{2}[a,x]\,
+\, [A,X]\otimes a\circ x$,

\smallskip

\item $[X\otimes x,Y\otimes y]=[X,Y]\otimes x\circ y\
-\bigl(XY+YX-\frac{2}{3}\trace(XY)I_3\bigr)\otimes
\frac{1}{2}[x,y]\, +\frac{1}{2}\trace(XY)\langle x\vert y\rangle$,

\smallskip

\item $[d,A\otimes a]=A\otimes d(a)$,

\smallskip

\item  $[d,X\otimes x]=X\otimes d(x)$,
\end{itemize}

\noindent for any $A,B\in\frg$, $X,Y\in\frh$, $a,b\in\calA$,
$x,y\in\calB$, and $d\in \calD$, where

\begin{itemize}
\item $\calA\times\calA\rightarrow \calA$: $(a,b)\mapsto a\circ
b$ is a symmetric bilinear map with $1\circ a=a$ for any
$a\in\calA$,

\item $\calA\times\calA\rightarrow \calB$: $(a,b)\mapsto [a,b]$
is a skew symmetric bilinear map with $[1,a]=0$ for any $a\in\calA$,

\item $\calA\times\calB\rightarrow \calA$: $(a,x)\mapsto [a,x]$
is a bilinear map with $[1,x]=0$ for any $x\in\calB$,

\item $\calA\times\calB\rightarrow \calB$: $(a,x)\mapsto a\circ x$
is a bilinear map with $1\circ x=x$ for any $x\in\calB$,

\item $\calB\times \calB\rightarrow \calA$: $(x,y)\mapsto x\circ
y$ is a symmetric bilinear map,

\item $\calB\times\calB\rightarrow \calB$: $(x,y)\mapsto [x,y]$
is a skew symmetric bilinear map,

\item $\calA\times\calA\rightarrow \calD$: $(a,b)\mapsto \langle a\vert b\rangle$
is a skew symmetric bilinear map,

\item $\calB\times\calB\rightarrow \calD$: $(x,y)\mapsto \langle x\vert y\rangle$
is a skew symmetric bilinear map,

\item the bilinear maps $\calD\times\calA\rightarrow\calA$:
$(d,a)\mapsto d(a)$ and $\calD\times\calB\rightarrow \calB$:
$(d,x)\mapsto d(x)$, give two representations of the Lie algebra
$\calD$.

\end{itemize}

Define $x\circ a=a\circ x$ and $[x,a]=-[a,x]$ for any $a\in \calA$
and $x\in\calB$, and define on the vector space
$\fra=\calA\oplus\calB$ a multiplication  by means
of
\[
u\cdot v=u\circ v+\frac{1}{2}[u,v]
\]
for any $u,v\in \calA\cup\calB$, so $u\circ v=\frac{1}{2}(u\cdot
v+v\cdot u)$ and $[u,v]=u\cdot v-v\cdot u$. Define too a linear map
$-\colon \fra\rightarrow \fra$ such that $\overline{a+x}=a-x$ for any
$a\in\calA$ and $x\in\calB$. Then (\cite[Theorem 7.5]{EO_Tits}) the subspace $\fra$, with this multiplication and involution, is a structurable algebra.

Besides, condition (iii) in Definition \ref{df:nonreduced_root_graded} shows $\calD=\langle\calA\vert\calA\rangle +\langle\calB\vert\calB\rangle$, and a straightforward application of the Jacobi identity gives
\[
\langle a\vert b\rangle (u)=D_{a,b}(u),\qquad \langle x\vert y\rangle (u)=D_{x,y}(u),
\]
for any $a,b\in\calA$, $x,y\in\calB$ and $u\in \calA\cup\calB$, where $D_{u,v}$ is the derivation of the structurable algebra $\fra$ defined in \cite[Equation (15)]{Allison78}:
\begin{equation}\label{eq:Duv}
D_{u,v}(w)=\frac{1}{3}[[u,v]+[\bar u,\bar v],w]+(w,v,u)-(w,\bar u,\bar v),
\end{equation}
for $u,v,w\in\fra$, where $(w,v,u)=(w\cdot v)\cdot u-w\cdot (v\cdot u)$ is the associator of the elements $w,v,u$.


\section{Fine gradings on semisimple Lie algebras}

The aim of this section is to show that any fine grading on a finite dimensional semisimple Lie algebra, with the property that the free rank of its universal group is $>0$, determines in a natural way a (possibly non reduced) root system. This root system is irreducible if the Lie algebra is simple.

The first two items of the next Proposition have been proved in \cite{CristinaCandido_F4} over the field of complex numbers. Given a finitely generated abelian group $G$, let $\tor(G)$ denote its torsion subgroup and let $\bar G$ be the quotient $G/\tor(G)$, which is free. Its rank is called the free rank of $G$.

\begin{proposition}\label{pr:fine_weight}
Let $\calL$ be a finite dimensional semisimple Lie algebra and let $\Gamma\,\colon\,\calL=\bigoplus_{g\in G} \calL_g$ be a fine grading. Assume that $G$ is the universal group of $\Gamma$. (Since the dimension of $\calL$ is finite, $G$ is a finitely generated abelian group.)

Then the following conditions hold:
\begin{romanenumerate}
\item The neutral homogeneous component $\calL_0$ is a toral subalgebra of $\calL$ (i.e., $\ad \calL_0$ consists of commuting diagonalizable operators in $\calL$).
\item The dimension of $\calL_0$ coincides with the free rank of $G$.
\item Let $\tor(G)$ be the torsion subgroup of $G$. The induced grading $\bar\Gamma\,\colon\, \calL=\bigoplus_{\bar g\in G/\tor(G)} \calL_{\bar g}$ is the weight space decomposition relative to $\calL_0$.
\end{romanenumerate}
\end{proposition}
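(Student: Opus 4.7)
The plan is to translate the problem into the language of the quasitorus $Q:=\hat G\subseteq \Aut(\calL)$ corresponding to $\Gamma$ by the duality recalled in Section~2. Since $G$ is the universal group of the \emph{fine} grading $\Gamma$, this $Q$ is a maximal quasitorus of $\Aut(\calL)$, and the homogeneous components are the joint eigenspaces $\calL_g=\{x\in\calL:\chi.x=g(\chi)x\ \text{for all}\ \chi\in Q\}$. In particular $\calL_0=\calL^Q$ is the centralizer of $Q$ in $\calL$, and all three assertions can be read off from the structure of $Q$ as an algebraic group.

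For (i) I would study the connected subgroup $H:=C_{\Aut(\calL)}(Q)^0$. Because $\calL$ is semisimple, $\Aut(\calL)^0$ is the adjoint group of $\calL$, hence reductive, and the centralizer of the linearly reductive subgroup $Q$ inside a reductive group is again reductive. Let $T\subseteq H$ be a maximal torus. The product $QT$ is a quasitorus (since $T$ commutes with $Q$) containing $Q$, so maximality of $Q$ forces $T\subseteq Q$, and consequently $T=Q^0$. Thus $Q^0$ is a central torus of $H$ that contains every maximal torus of $H$, so the derived subgroup $[H,H]$ has trivial maximal torus and must itself be trivial, giving $H=Q^0$. Therefore $\calL_0=\Lie(H)=\Lie(Q^0)$ is an abelian subalgebra acting on $\calL$ by commuting $\ad$-semisimple operators, i.e.\ a toral subalgebra.

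For (ii), decompose $G\cong\ZZ^r\oplus\tor(G)$, where $r$ is the free rank. Then $Q=\hat G\cong(\FF^\times)^r\times\widehat{\tor(G)}$, so $\dim Q^0=r$, and combining with (i) gives $\dim\calL_0=\dim Q^0=r$.

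For (iii), any character of $G$ of finite order restricts trivially to the connected torus $Q^0$, so $\tor(G)$ coincides with the kernel of the restriction map $\hat G\to\hat{Q^0}$ and $\bar G=G/\tor(G)$ is naturally identified with $\hat{Q^0}$. Under this identification $\bar\Gamma$ becomes the weight space decomposition of $\calL$ with respect to the torus $Q^0$, and differentiating characters along $\Lie(Q^0)=\calL_0$ shows that this coincides with the weight space decomposition under the adjoint action of $\calL_0$. The principal obstacle is the reductivity argument in~(i), specifically the vanishing of $[H,H]$; once that is in place, the steps in (ii) and (iii) are formal consequences of the duality between finitely generated abelian groups and quasitori.
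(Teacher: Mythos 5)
Your argument is correct, but it follows a genuinely different route from the paper. You transfer the whole problem to the automorphism group: fineness plus universality of $G$ makes $Q=\hat G$ a maximal quasitorus, and then (i) comes from showing $C_{\Aut(\calL)}(Q)^{\circ}=Q^{\circ}$ (reductivity of the centralizer, a maximal torus $T$ giving the quasitorus $QT$, hence $T=Q^{\circ}$, hence trivial derived group), after which (ii) and (iii) are formal consequences of the duality between finitely generated abelian groups and quasitori and of the injectivity of the differential on characters in characteristic zero. The paper instead works entirely at the Lie algebra level: nondegeneracy of the Killing form on $\calL_0$ gives reductivity, fineness forces $\calL_0$ to coincide with a Cartan subalgebra $\frh$ of itself (so $\calL_0$ is toral), and part (ii) is obtained by a two-sided counting argument — the surjection $\bar G\to\ZZ\Phi$ with $\FF\Phi=\frh^*$ gives $\rank\bar G\ge\dim\frh$, while the fact that derivations of $\calL$ are inner produces linearly independent elements $H_1,\dots,H_m\in\calL_0$ implementing the degree maps of a basis of $\bar G$, giving the reverse inequality; (iii) then follows from the explicit description $\frh=\FF H_1\oplus\cdots\oplus\FF H_m$. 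What your approach buys is that (ii) and (iii) become essentially automatic once $H=Q^{\circ}$ is established; what the paper's approach buys is that it is more self-contained (no appeal to reductivity of centralizers of linearly reductive subgroups, conjugacy and uniqueness of maximal tori, or smoothness of fixed-point subgroups) and it constructs the elements $H_\alpha$, the weight system $\Phi$ and the homomorphism $\pi$ that are reused directly in the proof of Theorem~\ref{th:root_system}. Two small points in your write-up deserve explicit justification: that $QT$ is indeed a (closed) quasitorus — it is, being the image of $Q\times T$ under multiplication of commuting diagonalizable groups — and the reductivity of $C_{\Aut(\calL)}(Q)^{\circ}$, which you assert without proof; the cleanest justification is precisely the paper's Killing-form argument, since $\Lie\bigl(C_{\Aut(\calL)}(Q)\bigr)=\ad(\calL^{Q})=\ad(\calL_0)$ and $\kappa\vert_{\calL_0}$ is nondegenerate.
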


\begin{proof}
The Killing form of $\calL$ satisfies $\kappa(\calL_g,\calL_h)=0$ unless $g+h=0$, and hence the restriction of $\kappa$ to $\calL_0$ is nondegenerate. This shows that $\calL_0$ is reductive (see \cite[Chapter I, \S6.4, Proposition 5]{Bou98}). Moreover, for any $X\in Z(\calL_0)$ (the center of $\calL_0$), the semisimple and nilpotent parts of $X$ belong to $Z(\calL_0)$ too and $\kappa(X_n,\calL_0)=0$ since $\ad X_n$ is nilpotent, so we get $X_n=0$. Therefore, the elements of $Z(\calL_0)$ are semisimple and $\calL_0$ is reductive in $\calL$ (see \cite[Chapter 1, \S 6.4,6.5]{Bou98}).

Let $\frh$ be a Cartan subalgebra of $\calL_0$. Hence $Z(\calL_0)$ is contained in $\frh$ and $\frh$ is maximal among the toral subalgebras of $\calL$ contained in $\calL_0$. For any $g\in G$, $\calL_g$ is invariant under the adjoint action of $\calL_0$. Therefore, $\Gamma$ can be refined by means of the weight space decomposition relative to the toral subalgebra $\frh$.

Since $\Gamma$ is fine, for any $g\in G$ there exists a linear form $\alpha\in\frh^*$ such that $\calL_g$ is contained in the weight space $\calL(\alpha)\bydef \{X\in \calL: [H,X]=\alpha(H)X\ \text{for all $H\in \frh$}\}$. In particular, $\calL_0=\calL(0)\cap\calL_0=\frh$ is a toral subalgebra. This proves the first part. (Note that $0$ denotes both the neutral component of $G$ and the trivial linear form, but this should cause no confusion.)

Therefore, $\Gamma$ is a refinement of the grading given by the weight space decomposition relative to the toral subalgebra $\frh=\calL_0$: $\hat\Gamma\,\colon\,\calL=\bigoplus_{\alpha\in\frh^*}\calL(\alpha)$. Denote by $\Phi$ the set of nonzero weights in this decomposition:
\begin{equation}\label{eq:Phi}
\Phi\bydef \{\alpha\in \frh^*\setminus{0}: \calL(\alpha)\ne 0\}.
\end{equation}
Then $\ZZ\Phi$ is a free abelian subgroup of $\frh^*$ and we may look at $\hat\Gamma$ as a grading by the group $\ZZ\Phi$.

Since $G$ is the universal group of $\Gamma$ and $\hat\Gamma$ is a coarsening of $\Gamma$, there is a surjective homomorphism \begin{equation}\label{eq:pi}
\pi \colon G\rightarrow \ZZ\Phi
\end{equation}
such that $\pi(g)=\alpha$ if $\calL_g\subseteq \calL(\alpha)$. And since $\ZZ\Phi$ is torsion free, $\pi$ induces a surjective homomorphism $\bar\pi\colon \bar G\bydef G/\tor(G)\rightarrow \ZZ\Phi$. In particular, the rank of the free group $\bar G$ is greater than or equal to the rank of $\ZZ\Phi$.

But $\FF \Phi$ is the whole dual vector space $\frh^*$, as otherwise there would exist an element $0\ne X\in \frh$ such that $\alpha(X)=0$ for any $\alpha\in \Phi$, and then $X$ would belong to the center of $\calL$, and this is trivial since $\calL$ is semisimple. In particular, this shows that the rank of the free abelian group $\ZZ\Phi$ is greater than or equal to the dimension of the vector space $\FF\Phi=\frh^*$. Hence we obtain $\rank(\ZZ \Phi)\geq \dim\frh$, and thus $\rank\bar G\geq \dim\frh$.

Since the universal group $G$ is generated by the support of $\Gamma$, so is $\bar G$ generated by the support of $\bar\Gamma$. But $\bar G$ is a finitely generated free abelian group, so there are elements $\bar g_1,\ldots,\bar g_m\in \supp\bar\Gamma$ such that $\bar G=\ZZ\bar g_1\oplus\cdots\oplus\ZZ\bar g_m$ (here $\bar g$ denotes the class of $g$ modulo $\tor(G)$).

The Lie algebra $\calL$ is semisimple, and hence any derivation is inner. In particular, for any $i=1,\ldots,m$, there is a unique element $H_i\in \calL$ such that $[H_i,X]=n_iX$ for any $X\in \calL_{n_1\bar g_1+\cdots+n_m\bar g_m}$. Moreover, we may replace $H_i$ by its component in $\calL_0=\frh$ for any $i$ so, by uniqueness, we obtain $H_1,\ldots,H_m\in \calL_0$. Since the sum $\calL_{\bar g_1}\oplus\cdots\oplus\calL_{\bar g_m}$ is direct, the elements $H_1,\ldots,H_m$ are linearly independent, and hence we get $m=\rank\bar G\leq \dim\frh$. This proves the second part: $\rank\bar G=\dim\frh$.

\smallskip

The argument above shows that $\frh=\FF H_1\oplus\cdots\oplus\FF H_m$, and for any $\bar g=n_1\bar g_1+\cdots+n_m\bar g_m$ we have $\calL_{\bar g}=\calL(\alpha)$, where $\alpha$ is the linear form on $\frh$ such that $\alpha(H_i)=n_i$ for any $i$. This proves the last part.
\end{proof}

\begin{remark}\label{rk:frg(0)}
The neutral component of the grading $\bar \Gamma$ in Proposition \ref{pr:fine_weight} is $\calL_{\bar 0}=\bigoplus_{g\in \tor(G)}\calL_g=\calL(0)$, and this is the centralizer $\Cent_{\calL}(\calL_0)$.

Using the arguments in the proof above, $\calL(0)$ is shown to be reductive in $\calL$, so $\calL(0)=Z(\calL(0))\oplus [\calL(0),\calL(0)]$ and $\calL_0\subseteq Z(\calL(0))$. In particular, the neutral component of the restriction of $\Gamma$ to $[\calL(0),\calL(0)]$ is trivial: $[\calL(0),\calL(0)]_0=0$.

Therefore, $\Gamma$ induces a grading on $[\calL(0),\calL(0)]$ by the finite group $\tor(G)$ whose homogeneous component of degree $0$ is trivial. These gradings are called \emph{special}. (See \cite{Hesselink} for properties of these gradings.)
\end{remark}

\begin{remark}\label{rk:(iii)not_enough}
Condition (ii) in Proposition \ref{pr:fine_weight} does not suffice to ensure that the grading $\Gamma$ is fine. As an example, let $V$ be a five-dimensional vector space with a basis $\{e_1,e_2,e_3,e_4,e_5\}$, endowed with the symmetric bilinear form $b$ such that $b(e_i,e_j)=\delta_{ij}$ for any $i,j$. Consider the associated orthogonal Lie algebra $\frso(V,b)$ of skew symmetric endomorphisms relative to $b$. The vector space $V$ is graded by $\bZ_2^3$, with $\deg e_1=(\bar 1,\bar 0,\bar 0)$, $\deg e_2=(\bar 0,\bar 1,\bar 0)$, $\deg e_3=(\bar 0,\bar 0,\bar 1)$, $\deg e_4=(\bar 1,\bar 1,\bar 1)$ and $\deg e_5=0$. This induces a grading by $\bZ_2^3$ on $\frso(V,b)$ of type $(4,3)$, with the basic elements $E_{ij}-E_{ji}$, $i\ne j$, being homogeneous of degree $\deg e_i+\deg e_j$. Here $E_{ij}$ denotes the endomorphism that takes $e_j$ to $e_i$ and annihilates the other basic elements. Then $\frsl(V,b)_0=0$, the free rank of the finite grading group is also $0$, but this grading is not fine, as it can be refined to get a grading of type $(10)$ by $\bZ_2^4$ with $\deg e_1=(\bar 1,\bar 0,\bar 0,\bar 0)$, $\deg e_2=(\bar 0,\bar 1,\bar 0,\bar 0)$, $\deg e_3=(\bar 0,\bar 0,\bar 1,\bar 0)$, $\deg e_4=(\bar 0,\bar 0,\bar 0,\bar 1)$ and $\deg e_5=0$.
\end{remark}

\begin{theorem}\label{th:root_system}
Let $\calL$ be a finite dimensional semisimple Lie algebra and let $\Gamma\,\colon\,\calL=\bigoplus_{g\in G} \calL_g$ be a fine grading. Assume that $G$ is the universal group of $\Gamma$. Let $\Phi$ be as in \eqref{eq:Phi}. Then,
$\Phi$ is a (possibly non reduced) root system in the euclidean vector space $E=\RR\otimes_\QQ \QQ\Phi$. If $\calL$ is simple, then $\Phi$ is an irreducible root system.
\end{theorem}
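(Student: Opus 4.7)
The plan is to verify the root-system axioms directly by building, for each $\alpha \in \Phi$, an $\frsl_2$-triple $(e, h_\alpha, f)$ inside $\calL$ whose semisimple element lies in $\frh := \calL_0$, and then invoking finite-dimensional $\frsl_2$-representation theory on each root string $\bigoplus_{n \in \ZZ} \calL(\beta+n\alpha)$ to produce the required reflections. By Proposition~\ref{pr:fine_weight}, $\frh$ is toral, $\bar\Gamma$ is the weight-space decomposition with respect to $\frh$, and $\Phi \subset \frh^*$ is finite and spans $\frh^*$. Since $\kappa(\calL_g, \calL_h) = 0$ unless $g+h = 0$, the Killing form restricts nondegenerately to $\frh$, so for each $\alpha \in \Phi$ there is $t_\alpha \in \frh$ with $\alpha(h) = \kappa(t_\alpha, h)$; setting $(\alpha,\beta) := \kappa(t_\alpha, t_\beta)$ gives a bilinear form on $\frh^*$. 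The identity $\kappa(h,h) = \sum_{\gamma \in \Phi} \dim\calL(\gamma)\,\gamma(h)^2$ (since $\ad h$ acts on $\calL(\gamma)$ by $\gamma(h)$) evaluated at $h = t_\alpha$ yields $(\alpha,\alpha) = \sum_\gamma \dim\calL(\gamma)\,(\alpha,\gamma)^2$; vanishing would force $\alpha$ to annihilate every $t_\gamma$ and hence $\alpha = 0$, so $(\cdot,\cdot)$ is positive definite on $\QQ\Phi$ and $E$ is Euclidean.

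The central technical point is to arrange that $h_\alpha$ actually lies in $\frh$, which requires choosing $e$ and $f$ to be homogeneous in the original fine grading $\Gamma$, not merely in the coarsening $\bar\Gamma$. Writing $\calL(\alpha) = \bigoplus_{\bar g = \alpha}\calL_g$ (via the isomorphism $\bar G \cong \ZZ\Phi$ established in the proof of Proposition~\ref{pr:fine_weight}), pick $g \in G$ with $\calL_g \neq 0$ and $0 \neq e \in \calL_g$. Then $\ad e$ is nilpotent (since $\supp\bar\Gamma$ is finite), and $\kappa$ pairs $\calL_g$ nondegenerately with $\calL_{-g}$, so one selects $f \in \calL_{-g}$ with $\kappa(e,f) = 2/(\alpha,\alpha)$. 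Homogeneity forces $[e,f] \in \calL_0 = \frh$, and $\kappa$-invariance gives $\kappa([e,f], h) = \alpha(h)\kappa(e,f)$ for $h \in \frh$, so $[e,f] = \kappa(e,f)\, t_\alpha = h_\alpha := 2t_\alpha/(\alpha,\alpha)$ and $(e, h_\alpha, f)$ is a genuine $\frsl_2$-triple. For any $\beta \in \Phi$, the finite-dimensional subspace $\bigoplus_n \calL(\beta + n\alpha)$ is $\frsl_2$-invariant and $h_\alpha$ acts on its $n$-th summand as the scalar $\beta(h_\alpha) + 2n$, so classical $\frsl_2$-theory gives $\beta(h_\alpha) = 2(\beta,\alpha)/(\alpha,\alpha) \in \ZZ$ and symmetry of the root string about $n = -\beta(h_\alpha)/2$. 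Hence $s_\alpha(\beta) := \beta - \beta(h_\alpha)\alpha$ lies in $\Phi \cup \{0\}$; it cannot be $0$, since that would force $\beta = 2c\alpha = c\alpha$ with $c = 0$. As $s_\alpha$ is the orthogonal reflection in $\alpha^\perp$, $\Phi$ satisfies all the axioms of a (possibly non-reduced) root system.

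For irreducibility when $\calL$ is simple, suppose $\Phi = \Phi_1 \sqcup \Phi_2$ with both $\Phi_i$ nonempty and $(\Phi_1, \Phi_2) = 0$. Positive-definiteness excludes $\alpha + \beta \in \Phi \cup \{0\}$ for $\alpha \in \Phi_1$, $\beta \in \Phi_2$ (e.g.\ $\alpha+\beta \in \Phi_1$ would give $(\alpha+\beta,\beta) = (\beta,\beta) > 0$, contradicting orthogonality), so with $\cM_i := \bigoplus_{\alpha \in \Phi_i}\calL(\alpha)$ we have $[\cM_1, \cM_2] = 0$. Define $\cI_0 := \{h \in \calL(0) : [h, \cM_2] = 0\}$; by the Jacobi identity it is a Lie ideal of $\calL(0)$, and it contains $\sum_{\alpha \in \Phi_1}[\calL(\alpha), \calL(-\alpha)]$ because $[[e,f],x] = [e,[f,x]] - [f,[e,x]] = 0$ whenever $e \in \calL(\alpha)$, $f \in \calL(-\alpha)$ with $\alpha \in \Phi_1$ and $x \in \cM_2$. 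A short case check using that $\calL(0)$ preserves weight spaces shows $\cI := \cM_1 \oplus \cI_0$ is an ideal of $\calL$; it is nonzero (as $\Phi_1 \neq \emptyset$) and proper (its weight support sits in $\Phi_1 \cup \{0\}$, so $\cM_2 \not\subseteq \cI$), contradicting simplicity. The most delicate step in the whole proof remains the $\frsl_2$-construction: a generic $e \in \calL(\alpha)$ would only give $[e,f] \in \calL(0)$, which may be strictly larger than $\frh$, and without $h_\alpha \in \frh$ the semisimple element of the triple would not act as a scalar on each weight space and the reflection argument would collapse.
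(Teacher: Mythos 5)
There is a genuine gap, and it sits exactly at the delicate point of the whole argument: the nonvanishing of $(\alpha,\alpha)$ and the positive definiteness of the form. In your first paragraph you deduce positive definiteness from the identity $(\alpha,\alpha)=\sum_{\gamma\in\Phi}\dim\calL(\gamma)\,(\alpha,\gamma)^2$ by claiming that vanishing would force every $(\alpha,\gamma)$ to vanish. But at that stage the scalars $(\alpha,\gamma)=\gamma(t_\alpha)$ are just elements of the algebraically closed field $\FF$: a sum of squares of such scalars can be zero with nonzero terms, so the implication fails and neither $(\alpha,\alpha)\neq 0$ nor positivity on $\QQ\Phi$ is justified there. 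This is not a cosmetic issue, because your construction of the $\frsl_2$-triple in the second paragraph uses $(\alpha,\alpha)\neq 0$ essentially (to define $h_\alpha=2t_\alpha/(\alpha,\alpha)$ and to normalize $\kappa(e,f)=2/(\alpha,\alpha)$), and the integrality $\beta(h_\alpha)\in\ZZ$ — which is what eventually makes all the values rational and the sum-of-squares argument legitimate — is only obtained afterwards. As written, the logical order is circular: positivity is asserted before the $\frsl_2$-theory that would make it provable.

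The paper avoids this trap by producing the triple first, with no reference to the form: for homogeneous $0\neq X\in\calL_g$ with $\pi(g)=\alpha\neq 0$, $\ad X$ is nilpotent and Jacobson--Morozov gives $H,Y$, whose homogeneous components can be taken so that $H\in\calL_0=\frh$ and $Y\in\calL_{-g}$; then $\frsl_2$-theory gives $\beta(H_\alpha)\in\ZZ$ for all $\beta$, hence $\kappa(H_\alpha,H_\alpha)=\sum_\beta\dim\calL(\beta)\,\beta(H_\alpha)^2\in\ZZ_{>0}$, and only then is $H_\alpha$ identified with $\tfrac{2}{(\alpha\mid\alpha)}T_\alpha$, yielding $(\alpha\mid\alpha)\in\QQ_{>0}$ and, by the same expansion for arbitrary $\gamma\in\QQ\Phi$, positivity of the form. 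Your route can be repaired without Jacobson--Morozov: with your homogeneous $e\in\calL_g$, $f\in\calL_{-g}$ one has $[e,f]=\kappa(e,f)\,t_\alpha$ unconditionally, and if $\alpha(t_\alpha)=0$ then $\FF e+\FF f+\FF t_\alpha$ is solvable with $t_\alpha$ in its derived algebra, so $\ad t_\alpha$ is nilpotent by Lie's theorem; since $t_\alpha\in\calL_0$ is semisimple (Proposition~\ref{pr:fine_weight}(i)), this forces $t_\alpha=0$, contradicting $\alpha\neq 0$. With $(\alpha,\alpha)\neq 0$ secured this way, your triple exists, integrality follows, and positivity should then be proved (not assumed) as in the paper's final computation. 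The remainder of your argument — homogeneity forcing $[e,f]\in\frh$, the root-string symmetry, ruling out $s_\alpha(\beta)=0$, and the irreducibility argument via the ideal $\cM_1\oplus\cI_0$ (a mild variant of the paper's ideal) — is sound.
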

\begin{proof}
Several steps will be followed:

\noindent\textbf{1.}\quad
Because of Proposition \ref{pr:fine_weight}, the set of weights $\Phi$ is precisely $\pi(\supp\Gamma\setminus\tor(G))$, with $\pi$ in \eqref{eq:pi}. Hence, for any $g\in \supp\Gamma\setminus\tor(G)$, let $\alpha=\pi(g)$ and take an element $0\ne X\in \calL_g\subseteq \calL(\alpha)$. Then $\calL_{-g}$ is contained in $\calL(-\alpha)$. Since $\alpha$ is not $0$, $\ad X$ is nilpotent. By the Jacobson--Morozov Theorem \cite[Chapter III, Theorem 17]{Jacobson}, there are elements $H,Y\in \calL$ such that $[H,X]=2X$, $[H,Y]=-2Y$ and $[X,Y]=H$ (i.e.; $X,H,Y$ form an \emph{$\frsl_2$-triple}). We have $H=\sum_{h\in G}H_h$, $Y=\sum_{h\in G}Y_h$ for homogeneous elements $H_h,Y_h\in \calL_h$, $h\in G$. Then $[H,X]=2X$ implies $[H_0,X]=2X$, so $\alpha(H_0)=2$, and hence $[H_0,Y_{-g}]=-2Y_{-g}$. Also, from $[X,Y]=H$ we get $[X,Y_{-g}]=H_0$. Therefore, we may take $H\in \calL_0=\frh$ and $Y\in \calL_{-g}$.

\noindent\textbf{2.}\quad
The restriction of the Killing form $\kappa$ to $\frh=\calL_0$ is nondegenerate, so it induces a nondegenerate symmetric bilinear form $(.\mid .)$ on $\frh^*=\FF \Phi$.  For any $\alpha\in \Phi$, take an element $g\in G$ with $\pi(g)=\alpha$, and an $\frsl_2$-triple $X\in \calL_g$, $H\in \calL_0$, $Y\in\calL_{-g}$ as above. For any $\beta\in \Phi$, the sum $\bigoplus_{i\in\ZZ}\calL(\beta+i\alpha)$ is a module for the subalgebra $\frs=\espan{X,H,Y}$ (isomorphic to $\frsl_2(\FF)$). With standard arguments we obtain $\beta(H)=r-q\in\ZZ$ and $\beta-\beta(H)\alpha\in \Phi$, where $q=\max\{n\in\ZZ : \beta+n\alpha\in \Phi\}$, $r=\max\{n\in \ZZ : \beta -n\alpha\in \Phi\}$. In particular, $H_\alpha\bydef H$ does not depend on $g$ or $X$, only on $\alpha$. Also, we get
\[
\kappa(H_\alpha,H_\alpha)=\sum_{\beta\in\Phi}\bigl(\dim\calL(\beta)\bigr)\beta(H_\alpha)^2\in \ZZ_{>0}.
\]

\noindent\textbf{3.}\quad For any $\alpha\in \frh^*$ there is a unique $T_\alpha\in \frh$ such that $\alpha(H)=\kappa(T_\alpha,H)$ for any $H\in\frh$.
If the element $T\in \frh=\calL_0$ satisfies $\alpha(T)(=\kappa(T_\alpha,T))=0$, then for any $\beta\in \Phi$ we have
\[
\trace\left((\ad H_\alpha \ad T)\vert_{\bigoplus_{i\in \ZZ}\calL_{\beta+i\alpha}}\right)
= \beta(T)\trace\left(\ad H_\alpha\vert_{\bigoplus_{i\in \ZZ}\calL_{\beta+i\alpha}}\right)=0.
\]
Hence $\kappa(H_\alpha,T)=0$ too, and hence $H_\alpha\in \FF T_\alpha$. Since $\alpha(H_\alpha)=2$, we get $H_\alpha=\frac{2}{\kappa(T_\alpha,T_\alpha)}T_\alpha =\frac{2}{(\alpha\vert\alpha)}T_\alpha$. Define, as usual, $\langle\beta\vert\alpha\rangle\bydef \frac{2(\beta\vert\alpha)}{(\alpha\vert\alpha)}=\beta(H_\alpha)$. Therefore we have for any $\alpha,\beta\in \Phi$ that
\[
\langle\beta\vert\alpha\rangle\in \ZZ\quad\text{and}\quad\beta-\langle \beta\vert\alpha\rangle\alpha\in \Phi.
\]
Also we have $\kappa(H_\alpha,H_\alpha) =\frac{4}{(\alpha\vert\alpha)^2}\kappa(T_{\alpha},T_{\alpha})
=\frac{4}{(\alpha\vert\alpha)}$, so $(\alpha\vert\alpha)=\frac{4}{\kappa(H_\alpha,H_\alpha)}$ is a positive rational number.

\noindent\textbf{4.}\quad
Take a basis $\{\alpha_1,\ldots,\alpha_m\}$ of $\frh^*$ contained in $\Phi$, and let $g_1,\ldots,g_m$ be elements in $G$ with $\pi(g_i)=\alpha_i$ for any $i=1,\ldots,m$. For any $\gamma\in \QQ\Phi\,(\subseteq \frh^*)$, there are rational numbers $r_1,\ldots,r_m$ such that $\gamma=r_1\alpha_1+\cdots+r_m\alpha_m$, and we get:
\[
\begin{split}
(\gamma\vert\gamma)&=\kappa(T_\gamma,T_\gamma)=\sum_{\beta\in \Phi}\bigl(\dim\calL(\beta)\bigr)\beta(T_\gamma)^2\\
 &=\sum_{\beta\in \Phi}\bigl(\dim\calL(\beta)\bigr)\left(\sum_{i=1}^m r_i\beta(T_{\alpha_i}\right)^2\\
 &=\sum_{\beta\in \Phi}\bigl(\dim\calL(\beta)\bigr)\left(\sum_{i=1}^m r_i(\beta\vert\alpha_i)\right)^2\\
 &=\sum_{\beta\in \Phi}\bigl(\dim\calL(\beta)\bigr)\left(\sum_{i=1}^m \frac{r_i(\alpha_i\vert\alpha_i)}{2}\langle\beta\vert\alpha_i\rangle\right)^2\in\QQ_{>0}.
\end{split}
\]
Hence $E=\RR\otimes_\QQ\QQ\Phi$ is a euclidean vector space with inner product determined by $(.\mid .)$, $\Phi$ is a finite subset of $E$ not containing $0$, that spans $E$ and such that $\langle \alpha\vert\beta\rangle=\frac{2(\alpha\vert\beta)}{(\beta\vert\beta)}\in \ZZ$ and $\beta-\langle\beta\vert\alpha\rangle\alpha\in \Phi$, for any $\alpha,\beta\in \Phi$. Therefore, $\Phi$ is a root system.

\noindent\textbf{5.}\quad
If $\calL$ is simple, then $\Phi$ must be irreducible, as otherwise $\Phi$ would split as a disjoint union $\Phi=\Phi_1\dot\cup \Phi_2$, with $(\Phi_1\vert\Phi_2)=0$. But then $\bigl(\bigoplus_{\alpha\in\Phi_1}\calL(\alpha)\bigr)\oplus\bigl(\sum_{\alpha\in \Phi_1}[\calL(\alpha),\calL(-\alpha)]\bigr)$ would be a proper ideal of $\calL$.
\end{proof}


\section{The main result}

With the same hypotheses as in the previous section, take a system of simple roots $\Delta$ of the root system $\Phi$ in \eqref{eq:Phi}. Hence $\Delta$ is a basis of $\frh^*$ contained in $\Phi$ and $\Phi=\Phi^+\dot\cup \Phi^-$, with $\Phi^+\subseteq \sum_{\alpha\in\Delta}\ZZ_{\geq 0}\alpha$, $\Phi^-=-\Phi^+$. For any $\alpha\in \Delta$ choose $g_\alpha\in G$ such that $\pi(g_\alpha)=\alpha$. Since $G$ is generated by $\supp\Gamma$, we have $G=\bigl(\bigoplus_{\alpha\in \Delta}\ZZ g_\alpha\bigr)\oplus \tor(G)$. Let $\tilde G\bydef \bigoplus_{\alpha\in \Delta}\ZZ g_\alpha$ and let
\begin{equation}\label{eq:frg}
\frg\bydef \bigoplus_{g\in \tilde G}\calL_g.
\end{equation}

The arguments in the proof of Proposition \ref{pr:fine_weight} show that $\frg$ is a reductive subalgebra in $\calL$. Also, any $0\ne X\in \frg_g$, $g\ne 0$, is contained in a $\frsl_2$-triple, so the center $Z(\frg)$ is contained in $\calL_0=\frh$. But the dimension of $\frh$ coincides with the rank of $\ZZ \Phi$, so we conclude that $Z(\frg)=0$ and $\frg$ is semisimple.

Also, any weight of $\frh$ on $\frg$ belongs to $\pm\bigl(\bigoplus_{\alpha\in \Delta}\ZZ_{\geq 0}\alpha\bigr)$, so $\Delta$ is a system of simple roots for $\frg$ relative to its Cartan subalgebra $\frh$. We conclude that $\frg$ is, up to isomorphism, the semisimple Lie algebra with $\Delta$ as a system of simple roots.

Now the main result of the paper, relating fine gradings and gradings by root systems, follows easily:

\begin{theorem}\label{th:root_grading}
Let $\calL$ be a finite dimensional simple Lie algebra and let $\Gamma\,\colon\,\calL=\bigoplus_{g\in G} \calL_g$ be a fine grading. Assume that $G$ is the universal group of $\Gamma$. Let $\Phi$ be as in \eqref{eq:Phi}.
Then $\calL$ is graded by the irreducible (possibly nonreduced) root system $\Phi$ with grading subalgebra $\frg$ in \eqref{eq:frg}. Moreover, if $\Phi$ is nonreduced (type $BC_r$), then $\frg$ is simple of type $B_r$.
\end{theorem}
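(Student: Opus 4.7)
The plan is to verify the three defining conditions of a $\Phi$-graded Lie algebra for both the reduced and nonreduced cases, leveraging the fact that the paragraph preceding the theorem already establishes the key structural result: $\frg=\bigoplus_{g\in\tilde G}\calL_g$ is a semisimple Lie algebra with Cartan subalgebra $\frh=\calL_0$ admitting $\Delta$ as a system of simple roots.

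For condition (i), if $\Phi$ is reduced then $\Delta$ is a system of simple roots for $\Phi$, so the root system of $\frg$ relative to $\frh$ is precisely $\Phi$; irreducibility from Theorem \ref{th:root_system} then forces $\frg$ to be simple. If instead $\Phi=BC_r$, I would argue that any positive system $\Phi^+$ contains both a short root $\alpha$ and its double $2\alpha$, so $\alpha$ itself must appear as a simple root (were it omitted, $\alpha$ could not be written as a nonnegative integer combination of simple roots). This forces $\Delta$ to have the Dynkin diagram of type $B_r$, and hence $\frg$ is simple of type $B_r$, yielding the final assertion. Condition (ii) follows from Proposition \ref{pr:fine_weight}(iii) once one identifies $\calL_0=\frh$.

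For condition (iii), set $I\bydef\sum_{\alpha\in\Phi}[\calL(\alpha),\calL(-\alpha)]$ and $M\bydef I\oplus\bigoplus_{\alpha\in\Phi}\calL(\alpha)$. I would show by routine Jacobi identity calculations that $M$ is an ideal of $\calL$: for $\alpha,\beta\in\Phi$ the bracket $[\calL(\alpha),\calL(\beta)]\subseteq\calL(\alpha+\beta)$ always lies in $M$ (it is zero when $\alpha+\beta\notin\Phi\cup\{0\}$, lies in $\bigoplus_\gamma\calL(\gamma)$ when $\alpha+\beta\in\Phi$, and lies in $I$ when $\alpha+\beta=0$); moreover $[\calL(0),M]\subseteq M$ because $\calL(0)$ preserves each weight space $\calL(\alpha)$ and a straightforward Leibniz computation gives $[\calL(0),I]\subseteq I$. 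Since $\calL$ is simple and $M\neq 0$, this forces $M=\calL$, and comparison with the weight space decomposition $\calL=\calL(0)\oplus\bigoplus_{\alpha\in\Phi}\calL(\alpha)$ yields $I=\calL(0)$.

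The principal anticipated obstacle is the nonreduced case of condition (i), where one must exclude the possibility that $\Delta$ generates a $C_r$ or $D_r$ subsystem in place of $B_r$; this is handled by the short-root observation above. The ideal verification in step (iii), though requiring careful bookkeeping, is otherwise standard.
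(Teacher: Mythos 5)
Your proposal is correct and takes essentially the same route as the paper: it rests on the preceding construction of $\frg$ (semisimple, with Cartan subalgebra $\frh=\calL_0$ and simple system $\Delta$), checks condition (iii) by noting that $\bigl(\bigoplus_{\alpha\in\Phi}\calL(\alpha)\bigr)\oplus\sum_{\alpha\in\Phi}[\calL(\alpha),\calL(-\alpha)]$ is a nonzero ideal of the simple algebra $\calL$, and handles the nonreduced case by a lattice/short-root observation on the base of $BC_r$, which is just a rephrasing of the paper's remark that every root of $\Phi$ is a sum of roots of $\frg$, forcing type $B_r$.
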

\begin{proof}
The Lie algebra $\calL$ contains the semisimple subalgebra $\frg$ with Cartan subalgebra $\frh$ and system of simple roots $\Delta$. Since $\calL$ is simple, $\Phi$ (or $\Delta$) is irreducible, and the ideal $\bigl(\bigoplus_{\alpha\in\Phi}\calL(\alpha)\bigr)\oplus\bigl(\sum_{\alpha\in \Phi}[\calL(\alpha),\calL(-\alpha)]\bigr)$ is the whole $\calL$. Hence $\calL$ is graded by the root system $\Phi$ with $\frg$ as a grading subalgebra. Moreover, any root in $\Phi$ is a sum of roots in $\frg$. Hence for $\Phi$ of type $BC_n$, $\frg$ is of type $B_n$.
\end{proof}


\section{Grading on the coordinate algebra}

Let $\Gamma\,\colon\, \calL=\bigoplus_{g\in G}\calL_g$ be a fine grading on a finite-dimensional simple Lie algebra, with $G$ being the universal group of $\Gamma$. As in the proof of Proposition \ref{pr:fine_weight}, let $\Phi$ be the set of weights of the adjoint action of $\calL_0$, and let $\pi\colon G\rightarrow \ZZ\Phi$ be the surjective group homomorphism with $\pi(g)=\alpha$ if $\calL_g\subseteq\calL(\alpha)$. Then $\pi$ induces an isomorphism $\bar\pi\colon \bar G=G/\tor(G)\rightarrow \ZZ\Phi$ by item (iii) of Proposition \ref{pr:fine_weight}. Let $\frg$ be the grading subalgebra in Theorem \ref{th:root_grading}, obtained after fixing a system of simple roots $\Delta$ and preimages $g_\alpha$ under $\pi$ of the elements in $\Delta$. Also, consider the free abelian group $\tilde G=\bigoplus_{\alpha\in\Delta}\ZZ g_\alpha$, such that $G=\tilde G\oplus\tor(G)$. The restriction of $\pi$ to $\tilde G$ is bijective.

If $\Phi$ is reduced, then we have a decomposition as in equation \eqref{eq:reduced_isotypic}. Then:
\begin{itemize}
\item $\frg=\frg\otimes 1$ is, by its own construction, a graded subalgebra of $\calL$, and hence so is its centralizer $\calD=\Cent_\calL(\frg)$.\\
    Besides, $\calD$ is contained in $\calL(0)=\bigoplus_{g\in \tor(G)}\calL_g$, and hence $\calD$ is graded by $\tor(G)$. Moreover, $\calD_0\subseteq \calL_0=\frg_0$, so $\calD_0=0$.\\
    Therefore, the grading of $\calD$ by $\tor(G)$ is a special grading. (See Remark \ref{rk:frg(0)}.)

\item Let $\lambda$ be the highest root of $\frg$ (relative to $\Delta$), then $\lambda$ is not a weight of $\calW$, and hence $\calL(\lambda)=\frg_\lambda\otimes\calA$. On the other hand, if $g_\lambda$ is the preimage in $\tilde G$ of $\lambda$, then
    \[
    \calL(\lambda)=\bigoplus\{\calL_g: \pi(g)=\lambda\}=\bigoplus_{g\in \tor(G)}\calL_{g_\lambda +g},
    \]
    so the vector space $\calA$ is graded by $\tor(G)$, where $\calA_h$ is defined by means of:
    \begin{equation}\label{eq:Ah}
    \calL_{g_\lambda+h}=\frg_\lambda\otimes\calA_h,
    \end{equation}
    for any $h\in\tor(G)$.

\item Since $\frg\otimes\calA$ is the $\frg$-submodule of $\calL$ generated by $\frg_\lambda\otimes\calA$ ($\lambda$ is the highest root of $\frg$), it follows that $\frg\otimes\calA$ is a graded subspace of $\calL$ and for any $g\in \tilde G$ and $h\in \tor(G)$ we have
    \[
    (\frg\otimes\calA)_{g+h}=\frg_{\pi(g)}\otimes\calA_h.
    \]

\item By invariance under the adjoint action of $\frg$, the subspace $\calW\otimes\calB$ turns out to be the orthogonal complement of $\bigl(\frg\otimes\calA\bigr)\oplus\calD$ relative to the Killing form of $\calL$. Since this latter subspace is a graded subspace of $\calL$, so is $\calW\otimes\calB$. \\
    Let $\mu$ be the highest weight of the $\frg$-module $\calW$ relative to $\Delta$ ($\mu$ is the highest short root). Let $g_\mu$ be the preimage by $\pi$ in $\tilde G$ of $\mu$. Then, as for $\calA$, we also get that $\calB$ is graded by $\tor(G)$ if we define $\calB_h$ by means of
    \[
    (\calW\otimes\calB)_{g_\mu+h}=\calW_\mu\otimes \calB_h,
    \]
    for any $h\in \tor(G)$. And since $\calW$ is generated, as a module for $\frg$, by $\calW_\mu\,(=\{w\in \calW: H.w=\mu(H)w\ \text{for all}\ H\in\frh=\frg_0\})$, it follows that the subspace $\calW\otimes\calB$ is a graded subspace of $\calL$ and for any $g\in\tilde G$ and $h\in \tor(G)$ we have
    \[
    (\calW\otimes\calB)_{g+h}=\calW_{\pi(g)}\otimes\calB_h.
    \]
\end{itemize}

On the other hand, if $\Phi$ is nonreduced of type $BC_1$, then we have a decomposition as in Equation \eqref{eq:BC1_isotypic}, and the same arguments show that $\calD$ inherits a special grading by $\tor(G)$, that if $\mu$ is the highest weight then $\calL(\mu)=\calW_\mu\otimes\calB$, and this shows that $\calB$ is graded by $\tor(G)$ as above. Finally, $\frg\otimes\calA$ is the orthogonal complement to $(\calW\otimes\calB)\oplus\calD$ relative to the Killing form, and we conclude that $\calA$ is graded too by $\tor(G)$ as above.

Finally, if $\Phi$ is nonreduced of type $BC_r$, $r\geq 2$, then we have a decomposition as in Equation \eqref{eq:BCr_isotypic} and one checks as before that $\calD$ inherits a special grading by $\tor(G)$, that if $\mu$ is the highest weight of $\frs$, then $\calL(\mu)=\frs_\mu\otimes\calB$, and hence it follows that $\calB$ is $\tor(G)$-graded. Then $(\frg\otimes\calA)\oplus(\calW\otimes\calC)$ is the orthogonal complement, so it is a graded subspace too. Here, if $\lambda$ is the highest root, then $\Bigl((\frg\otimes\calA)\oplus(\calW\otimes\calC)\Bigr)\cap\calL(\lambda)=\frg_\lambda\otimes \calA$, so again we conclude that $\calA$ is $\tor(G)$-graded, and from here we deduce that so is $\calC$.

These arguments prove most of the next result:

\begin{proposition}\label{pr:grading_coordinate_algebra}
Under the conditions above, with $\Phi$ being an irreducible root system, the coordinate algebra $\fra=\calA\oplus\calB$ (in the reduced case or for $BC_1$) or $\frb=\fra\oplus\calC$ (in the $BC_r$-case, $r\geq 2$) inherits a fine grading by $\tor(G)$, where $\cA$ and $\cB$, and $\cC$ in the $BC_r$-case, $r\geq 2$, are graded subspaces.

Moreover, $\calA_0=\FF 1$ while $\calB_0=0$, and also $\calC_0=0$ (in the $BC_r$-case, $r\geq 2$), $\tor(G)$ is the universal group, and this grading on $\fra$, or $\frb$, induces a special grading on $\calD$ by $\tor(G)$.
\end{proposition}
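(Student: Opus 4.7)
The discussion preceding the statement has already exhibited $\calA$, $\calB$, and (where applicable) $\calC$ as $\tor(G)$-graded subspaces of $\calL$; what remains is to verify that these gradings are algebra gradings, to identify the neutral components, and to establish universality and fineness. For the first point, I would read off the multiplications on the coordinate algebra directly from the bracket formulas for $\Phi$-graded Lie algebras, such as \eqref{eq:G2_graded} and their analogues in the $BC_r$ and classical cases. Each such formula presents the bracket of two homogeneous tensors as a sum of $\frg$-isotypic pieces whose $\frg$-factors are intrinsic maps (the Lie bracket, a module action, a trace form, etc.) and whose scalar factors are precisely the products in $\calA,\calB,\calC$ together with the images of $\langle\cdot\vert\cdot\rangle$ in $\calD$. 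Since the ambient Lie bracket is $G$-homogeneous and the isotypic decomposition is itself a $G$-graded decomposition, comparing degrees componentwise forces every product in the coordinate algebra to respect the $\tor(G)$-grading and shows that $\langle\cdot\vert\cdot\rangle$ sends $\fra_h\otimes\fra_{h'}$ into $\calD_{h+h'}$, so that $\calD$ acquires an induced $\tor(G)$-grading.

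For the neutral components, the embedding $\frg\hookrightarrow\calL$, $D\mapsto D\otimes 1$, coincides by construction with the graded subalgebra $\bigoplus_{g\in\tilde G}\calL_g$, so $1\in\calA_0$. Since $\tilde G\cap\tor(G)=0$ in $G$, one has $(\frg\otimes\calA)_0=\frg_0\otimes\calA_0=\frh\otimes\calA_0\subseteq\calL_0=\frh$ by Proposition~\ref{pr:fine_weight}, and a dimension count forces $\calA_0=\FF 1$. To obtain $\calB_0=0$ I would \emph{not} argue from $\calL_0$ (the zero weight space of $\calW$ may be trivial, as happens for $C_r$), but instead pick a weight $\gamma$ of $\calW$ that is simultaneously a root of $\frg$ and let $g_\gamma\in\tilde G$ be its preimage. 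On the one hand $\calL_{g_\gamma}=\frg_\gamma$ is a one-dimensional root space of the grading subalgebra $\frg$; on the other hand the isotypic decomposition yields $\calL_{g_\gamma}=\frg_\gamma\otimes\calA_0\oplus\calW_\gamma\otimes\calB_0$ with $\calW_\gamma\neq 0$, and the equality of these two descriptions forces $\calB_0=0$. The same pattern applied at a short root $\gamma=\pm e_i$ (a weight of $\calW$ but not of $\frs$) yields $\calC_0=0$ in the $BC_r$-case ($r\geq 2$); and the comparison at $g=0$, combined with $\calB_0=0$, gives $\calD_0=0$, so the induced grading on $\calD$ is indeed special.

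For universality and fineness I would run a single refinement argument. Let $T$ denote the universal group of the $\tor(G)$-grading on $\fra$ (or $\frb$); there is a canonical surjection $T\twoheadrightarrow\tor(G)$. Reassembling $\calL$ through the same isotypic formulas, with the $\tilde G$-grading on $\frg$ left untouched and the $T$-grading on $\fra$ inducing a $T$-grading on $\calD$ via $\langle\cdot\vert\cdot\rangle$, produces a grading of $\calL$ by $\tilde G\oplus T$ that refines $\Gamma$. Fineness of $\Gamma$ forces this refinement to be improper, so $T\twoheadrightarrow\tor(G)$ is an isomorphism. The symmetric assertion---that a proper refinement of the coordinate algebra's grading would lift through the same construction to a proper refinement of $\Gamma$---shows that the grading on $\fra$ (or $\frb$) is itself fine.

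The main obstacle I anticipate is making the lifting argument of the last paragraph fully rigorous: one must check that the $\frg$-equivariant maps that serve as generators of the relevant $\Hom_\frg$-spaces (as listed in Section~3) are non-degenerate enough that every relation in the universal group of the coordinate algebra's grading pulls back to a non-zero bracket in $\calL$, and that $\calD=\langle\calA\vert\calA\rangle+\langle\calB\vert\calB\rangle$ (with its $BC_r$ analogue) inherits a well-defined $T$-grading compatible with its action on $\fra$. Both assertions reduce to a case analysis depending on the type of $\Phi$ that is straightforward once the bracket formulas of Section~3 are in hand, and the remaining verifications are essentially bookkeeping with the decomposition $G=\tilde G\oplus\tor(G)$.
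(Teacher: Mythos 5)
Your reduction of the proposition to the pre-proposition discussion, the degree-by-degree check that the coordinate products respect the $\tor(G)$-grading, and your lifting argument for fineness (a proper refinement of the grading on $\fra$ or $\frb$ would reassemble, via $\calD=\langle\calA\vert\calA\rangle+\langle\calB\vert\calB\rangle$ and its $BC_r$ analogue, into a proper refinement of $\Gamma$) all match the paper's proof. On the neutral components, the paper reads $\calA_0=\FF 1$, $\calB_0=0$, $\calC_0=0$ and $\calD_0=0$ directly off $\calL_0=\frg_0\otimes 1$; your stated reason for avoiding this (``the zero weight space of $\calW$ may be trivial, as happens for $C_r$'') is factually off, since the module with highest weight the highest short root of $C_r$ has zero weight space of dimension $r-1\neq 0$, but your detour through a weight $\gamma$ of $\calW$ which is also a root, using $\calL_{g_\gamma}\subseteq\frg=\frg\otimes 1$, is correct and does no harm.

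The genuine gap is in the universality step. After reassembling a $\tilde G\oplus T$-grading from the universal group $T$ of the coordinate grading, you conclude: fineness of $\Gamma$ forces the refinement to be improper, ``so $T\twoheadrightarrow\tor(G)$ is an isomorphism.'' The second claim does not follow from the first. Improperness only says that the reassembled grading has the same homogeneous components as $\Gamma$, i.e., that the decomposition $\Gamma$ is realized as a $\tilde G\oplus T$-grading; it gives no control on the kernel of the canonical surjection $T\rightarrow\tor(G)$, since a fixed decomposition can be realized over many groups strictly larger than its universal group. What closes the argument is the universal property of $G$: once $\Gamma$ is realized over $\tilde G\oplus T$, there is a homomorphism $G\rightarrow\tilde G\oplus T$ restricting to the identity on $\supp\Gamma$; it sends torsion to torsion, hence restricts to a homomorphism $\tor(G)\rightarrow T$, and evaluating both composites on the supports shows that it inverts $T\rightarrow\tor(G)$ (surjectivity of the latter itself uses that $\tor(G)$ is generated by the support of the coordinate grading, which follows from $G=\langle\supp\Gamma\rangle$ and $\calD=\langle\calA\vert\calA\rangle+\langle\calB\vert\calB\rangle$). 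This is exactly the ingredient the paper invokes when it says the last part is a direct consequence of $G$ being the universal group of $\Gamma$; fineness is the right tool for fineness of the coordinate grading, but not for identifying $\tor(G)$ as its universal group.
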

\begin{proof}
The fact that $\fra$ inherits a grading by $\tor(G)$ is clear from the earlier arguments. Also $\calL_0=\frg_0=\frg_0\otimes 1$, so $\calA_0=\FF 1$ and $\calB_0=0$ (and $\calC_0=0$ too in the $BC_r$ case, $r\geq 2$). Hence $\fra_0=\FF 1$. Besides, any refinement of this grading on $\fra$ would give a refinement of $\Gamma$, as the grading by $\tor(G)$ of $\calD$ is determined by the grading on $\fra$, because of condition (iii) in Definition \ref{df:reduced_root_graded}. The last part is a direct consequence of $G$ being the universal group of $\Gamma$.
\end{proof}


\section{Applications}

The results in the previous sections will be used to classify the fine gradings on the simple exceptional Lie algebras whose universal group have free rank $>2$. Quick proofs of the classification of fine gradings, up to equivalence, on the simple Lie algebras of type $G_2$ and $F_4$ will be given too.

\smallskip

Table 25 in \cite{Dynkin} gives a list of the simple subalgebras of rank $>1$ of the exceptional simple Lie algebras, together with the decomposition of any such simple Lie algebra as a sum of irreducible modules for the simple subalgebra. This immediately gives the different possibilities, up to conjugation, of grading an exceptional simple Lie algebra by an irreducible (not necessarily reduced) root system of rank $\geq 2$. The different possibilities are summarized in Table \ref{table:root_gradings}, where $\frg$, $\frs$, $\calW$, $\calA$, $\calB$, $\calC$ and $\calD$ are as in Equations \eqref{eq:reduced_isotypic} or \eqref{eq:BCr_isotypic}.

In many cases, this corresponds (see \cite{BZ,Tits66}) to the well-known Tits construction $\calT(\calC,\calJ)$, for a unital composition algebra $\calC$ and a degree three simple Jordan algebra $\calJ$, which we recall now (see also \cite{EO_Tits}):

Let $\calH$ be a unital composition algebra (or Hurwitz algebra)
with norm $n$ and trace $t$. The unital composition algebras are, up to isomorphism, $\bF$, $\bK=\bF\oplus\bF$, $\bH=\Mat_2(\bF)$ (quaternion algebra), and the Cayley algebra $\bO$ (recall that the ground field $\bF$ is assumed to be algebraically closed).
Let $\calJ$ be a unital simple Jordan algebra of degree $3$, so that $\calJ$ is the Jordan algebra $H_3(\calH')$ of hermitian $3\times 3$ matrices over another unital composition algebra $\calH'$. Denote by $\calH_0$ and $\calJ_0$ the subspaces of trace zero elements in $\calH$ and $\calJ$.

For $a,b\in\calH$, the linear map $D_{a,b}\colon \calH\rightarrow\calH$ defined by $D_{a,b}(c)=[[a,b],c]+3(a,c,b)$,
where $[a,b]=ab-ba$ is the commutator, and $(a,c,b)=(ac)b-a(cb)$ the
associator, is a derivation of $\calH$. These derivations span the Lie algebra $\Der(\calH)$.

In the same vein,
for $x,y\in \calJ$, the linear map $d_{x,y}\colon \calJ\rightarrow \calJ$ defined by
\begin{equation}\label{eq:dxy}
d_{x,y}(z)=x(yz)-y(xz),
\end{equation}
is the inner derivation of $\calJ$ determined by the elements $x$ and
$y$. These derivations span the Lie algebra of derivations $\Der(\calJ)$

Given $\calH$ and $\calJ$ as before, consider the space
\begin{equation}\label{eq:TCJ}
\calT(\calH,\calJ)=\Der(\calH)\oplus \bigl(\calH_0\otimes \calJ_0\bigr)\oplus \Der(\calJ),
\end{equation}
with the
anticommutative multiplication $[.,.]$ specified by:
\begin{itemize}
\item $\Der(\calH)$ and $\Der(\calJ)$ are Lie subalgebras, and $[\Der(\calH),\Der(\calJ)]=0$,
\item $[D,a\otimes x]=D(a)\otimes x$, $[d,a\otimes x]=a\otimes
d(x)$,
\item $[a\otimes x,b\otimes y]=\trace(xy)D_{a,b}+\bigl([a,b]\otimes
x*y\bigr)+2t(ab)d_{x,y}$,
\end{itemize}
for all $D\in \Der(\calH)$, $d\in \Der(\calJ)$, $a,b\in \calH_0$, and $x,y\in \calJ_0$, where $x*y=xy-\frac{1}{3}\trace(xy)1$.

Looking at Equation \eqref{eq:TCJ} from the left, in case $\calH$ is the Cayley algebra $\bO$ (i.e., $\dim \calH=8$), then $\Der(\bO)$ is the simple Lie algebra of type $G_2$ and \eqref{eq:TCJ} gives a decomposition as in Equation \eqref{eq:reduced_isotypic}, thus proving that $\calT(\bO,\calJ)$ is graded by the root system of type $G_2$ with coordinate algebra $\calJ=\FF 1\oplus\calJ_0$.

Looking from the right, we obtain:
\begin{itemize}
\item If $\calJ$ is the Albert algebra $\bA$ (i.e.; $\calJ$ is the algebra of hermitian $3\times 3$-matrices over the Cayley algebra), then $\Der(\bA)$ is the simple Lie algebra of type $F_4$, and \eqref{eq:TCJ} proves that $\calT(\bO,\bA)$ is graded by the root system of type $F_4$ with coordinate algebra $\bO=\FF 1\oplus\bO_0$.
\item If $\calJ$ is the Jordan algebra $H_3(\bH)$, then $\Der(\calJ)$ is the simple Lie algebra of type $C_3$, and $\calT(\bO,\calJ)$ is graded by the root system of type $C_3$ with coordinate algebra $\bO$.
\item Also, if $\calJ$ is the Jordan algebra $\Mat_3(\bF)^+=H_3(\bK)$, then $\Der(\calJ)$ is the simple Lie algebra of type $A_2$, and then $\calT(\bO,\calJ)$ is graded by the root system of type $A_2$ with coordinate algebra $\bO$.
\end{itemize}

\begin{table}[h!]
\begin{tabular}{|c|c|c|c|c|c|c|c|}
\hline
Lie&Root&&&&&&coordinate\\
algebra&system&$\dim\calA$&$\dim\calB$&$\dim\calC$&$\dim\calD$&model& algebra\\
\hline\hline
$G_2$&$G_2$&$1$&$0$&--&$0$&&$\FF$\\
\hline\hline
$F_4$&$G_2$&$1$&$5$&--&$3$&$\calT(\bO,H_3(\bF))$&$H_3(\bF)$\\
\hline
$F_4$&$F_4$&$1$&$0$&--&$0$&&$\FF$\\
\hline\hline
$E_6$&$A_2$&$8$&--&--&$14$&$\calT(\bO,H_3(\bK))$&$\bO$\\
\hline
$E_6$&$BC_2$&$5$&$1$&$2$&$4$&&\\
\hline
$E_6$&$G_2$&$1$&$8$&--&$8$&$\calT(\bO,H_3(\bK))$&$\Mat_3(\bF)^+$\\
\hline
$E_6$&$F_4$&$1$&$1$&--&$0$&$\calT(\bK,\bA)$&$\bK$\\
\hline
$E_6$&$E_6$&$1$&$0$&--&$0$&&$\bF$\\
\hline\hline
$E_7$&$BC_2$&$7$&$1$&$8$&$9$&&\\
\hline
$E_7$&$G_2$&$1$&$14$&--&$21$&$\calT(\bO,H_3(\bH))$&$H_3(\bH)$\\
\hline
$E_7$&$C_3$&$1$&$7$&--&$14$&$\calT(\bO,H_3(\bH))$&$\bO$\\
\hline
$E_7$&$F_4$&$1$&$3$&--&$3$&$\calT(\bH,\bA)$&$\bH$\\
\hline
$E_7$&$E_7$&$1$&$0$&--&$0$&&$\FF$\\
\hline\hline
$E_8$&$BC_2$&$11$&$1$&$20$&$24$&&\\
\hline
$E_8$&$G_2$&$1$&$26$&--&$52$&$\calT(\bO,\bA)$&$\bA$\\
\hline
$E_8$&$F_4$&$1$&$7$&--&$14$&$\calT(\bO,\bA)$&$\bO$\\
\hline
$E_8$&$E_8$&$1$&$0$&--&$0$&&$\FF$\\
\hline
\end{tabular}
\caption{{\vrule height 18pt width 0pt}Gradings by root systems of rank $\geq 2$ of the exceptional simple Lie algebras.}\label{table:root_gradings}
\end{table}

\begin{theorem}\label{th:exceptional_rank>3}
The fine gradings, up to equivalence, of the exceptional simple Lie algebras whose universal group has free rank $\geq 3$ are the following:
\begin{itemize}
\item The Cartan gradings of $F_4$, $E_6$, $E_7$ and $E_8$. The universal group is $\bZ^r$ with $r$ the rank of the algebra.
\item The gradings of $E_r$, $r=6,7,8$ induced by gradings by the root system of type $F_4$. The universal  groups are $\bZ^4\times\bZ_2^{r-5}$, $r=6,7,8$, and the respective types are $(72,1,0,1)$, $(120,0,3,1)$ and $(216,0,0,8)$.
\item A grading of $E_7$ induced by a grading by the root system of type $C_3$. The universal group is $\bZ^3\times\bZ_2^3$ and its type is $(102,0,1,7)$.
\end{itemize}
\end{theorem}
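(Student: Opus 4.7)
The plan is to combine Theorem \ref{th:root_grading} with the enumeration of simple rank-at-least-two subalgebras of the exceptionals recorded in Table \ref{table:root_gradings}, and then to reduce the remaining work to a classification of fine gradings on the coordinate algebra via Proposition \ref{pr:grading_coordinate_algebra}. First I would note that the surjection $\pi\colon G\rightarrow\ZZ\Phi$ from the proof of Theorem \ref{th:root_system} descends to an isomorphism $\bar\pi\colon \bar G\to\ZZ\Phi$, so by Proposition \ref{pr:fine_weight}(ii) the free rank of $G$ equals $\rank\Phi$. Thus the hypothesis translates into $\rank\Phi\geq 3$, and an inspection of Table \ref{table:root_gradings} leaves exactly the candidates listed in the statement: $\Phi$ equal to the root system of $\calL$ (the Cartan case for each exceptional $\calL$), $\Phi=F_4$ with $\calL=E_r$ for $r=6,7,8$, and $\Phi=C_3$ with $\calL=E_7$. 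By Proposition \ref{pr:grading_coordinate_algebra}, any such fine grading of $\calL$ is equivalent to the data of a fine grading of the coordinate algebra $\fra$ from Table \ref{table:root_gradings} by an abelian group $T$ with $\calA_0=\FF 1$ and $\calB_0=0$; then $T\cong\tor(G)$ and $G=\bZ^{\rank\Phi}\oplus T$.

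Next I would run through the cases. When $\Phi$ is the root system of $\calL$, the coordinate algebra is just $\FF$, forcing $T=0$ and yielding exactly the Cartan grading. When $\Phi=F_4$ and $\calL=E_r$ with $r=6,7,8$, Table \ref{table:root_gradings} gives $\fra=\bK,\bH,\bO$ respectively, each with its standard involution so that $\calA=\FF 1$ and $\calB$ is the space of trace-zero elements. Here I would invoke the classification of fine abelian group gradings on Hurwitz algebras respecting the involution and satisfying $\calB_0=0$: this yields uniquely the $\bZ_2$-grading of $\bK$, the $\bZ_2^2$ Pauli-type grading of $\bH$, and the $\bZ_2^3$ Cayley--Dickson grading of $\bO$. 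When $\Phi=C_3$ with $\calL=E_7$, the coordinate algebra is again $\bO$ with the same $\bZ_2^3$-grading. The universal groups stated in the theorem then follow.

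To finish, I would compute the types by adding up the dimensions of $\frg_{\pi(g)}\otimes\calA_h$, $\calW_{\pi(g)}\otimes\calB_h$, and, when $g\in\tilde G$ is zero, $\calD_h$, over each degree $(g,h)\in\tilde G\oplus T$. For $\Phi=F_4$, $\frg$ has $48$ roots and a four-dimensional Cartan, while the $26$-dimensional module $\calW$ has $24$ short-root weight spaces of multiplicity one plus zero weight of multiplicity two; for $\Phi=C_3$ the corresponding numbers are $18$ roots, a three-dimensional Cartan, and $12$ short-root weight spaces plus zero weight of multiplicity two in the $14$-dimensional $\calW$. Combined with the special induced grading on $\calD$ (trivial for $\calL=E_6$ with $\Phi=F_4$; the type-$(3,0,0)$ $\bZ_2^2$-grading of $\Sl_2$ for $\calL=E_7$ with $\Phi=F_4$; and the type-$(0,7)$ $\bZ_2^3$-grading of $G_2$ for $\calL=E_8$ with $\Phi=F_4$ or $\calL=E_7$ with $\Phi=C_3$), and noting that the degree-$(0,h)$ components for $h\neq 0$ receive contributions from both $\calW_0\otimes\calB_h$ and $\calD_h$, the types $(72,1,0,1)$, $(120,0,3,1)$, $(216,0,0,8)$ and $(102,0,1,7)$ emerge by direct arithmetic. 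The main obstacle will be the uniqueness statement for the fine gradings on the Hurwitz coordinate algebras under the involution constraint; once that is in hand, the rest is a straightforward count.
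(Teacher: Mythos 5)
Your overall route is the same as the paper's: Proposition \ref{pr:fine_weight}(ii) plus Theorem \ref{th:root_grading} convert the hypothesis into $\rank\Phi\geq 3$, Table \ref{table:root_gradings} leaves precisely the listed candidates, the coordinate algebras $\FF$, $\bK$, $\bH$, $\bO$ carry a unique grading with neutral component $\FF 1$ (the Cayley--Dickson doubling gradings --- this is a known classification, cited in the paper, so the ``main obstacle'' you flag is not actually one), and your type computations via the models $\calT(\calH,\bA)$ and $\calT(\bO,H_3(\bH))$, including the contributions of $\calW_0\otimes\calB_h$ together with $\calD_h$ in the degrees $(0,h)$, are correct.

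There is, however, a genuine gap: you never verify that the gradings so assembled are themselves fine. Proposition \ref{pr:grading_coordinate_algebra} only goes in one direction --- from a fine grading of $\calL$ to a fine grading of the coordinate algebra --- so your phrase ``any such fine grading of $\calL$ is equivalent to the data of a fine grading of the coordinate algebra'' proves only that every fine grading of free rank $\geq 3$ occurs among the candidates, not that each candidate is fine and hence belongs in the classification. If, say, the $\bZ^4\times\bZ_2^3$-grading of $E_8$ admitted a proper refinement, it would have to be removed from the list; note that such a refinement could have a smaller neutral component, hence smaller free rank, so it is not excluded by your forward analysis. The paper settles this point explicitly: the neutral component of each candidate is the Cartan subalgebra of the grading subalgebra (e.g.\ $\Der(\bA)$ of type $F_4$ in the models $\calT(\calH,\bA)$), the induced grading on that subalgebra is the Cartan grading, which is fine, and therefore any proper refinement would have to be attached, via Theorem \ref{th:root_grading}, to a grading of the ambient exceptional algebra by a root system of larger rank, which Table \ref{table:root_gradings} rules out. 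You need this argument (or an equivalent maximality argument for the associated quasitorus) to conclude; everything else in your proposal matches the paper's proof.
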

\begin{proof}
The only gradings by root systems of rank $\geq 3$ in Table \ref{table:root_gradings} are the Cartan gradings, the gradings by the root system of type $F_4$ of $E_r$, $r=6,7,8$, and the grading by the root system of type $C_3$ of $E_7$. In the second case, the coordinate algebra is $\calH=\bK$, $\bH$ or $\bO$ respectively, and the only grading on these algebras with neutral component equal to $\FF 1$ are the gradings obtained by the Cayley-Dickson doubling process (see \cite{Elduque_octonions} or \cite{EK}), whose universal groups are $\bZ_2$, $\bZ_2^2$ and $\bZ_2^3$ respectively. The computation of the types is straightforward using the model $\calT(\calH,\bA)$. Finally, these gradings are fine as the neutral component is the Cartan subalgebra of the subalgebra $\Der(\bA)$ of type $F_4$, and the grading induced in this subalgebra is the Cartan grading, which is fine. Hence, if any of these gradings could be refined, the refinement would be attached to a grading by a root system of rank $\geq 4$, which is impossible.

Finally, the coordinate algebra for the grading by the root system of type $C_3$ of $E_7$ is $\bO$. The only grading of $\bO$ whose neutral component is $\bF 1$ is its $\bZ_2^3$-grading. The resulting grading by $\bZ^3\times\bZ_2^3$ of $E_7$ is fine and its type is easily computed using the model $\calT(\bO,H_3(\bH))$.
\end{proof}

We finish with the promised short proofs of the classification of fine gradings for $G_2$ and $F_4$. For $G_2$ it was proved independently in \cite{CristinaCandido_G2} and \cite{BahturinTvalavadze}, and for $F_4$ in \cite{CristinaCandido_F4} (see also \cite{Cristina_Noncompu} and \cite{EK}). The arguments here are very different in nature.

\begin{theorem}\label{th:G2}
Up to equivalence, the simple Lie algebra of type $G_2$ is endowed with two different fine gradings: the Cartan grading by $\bZ^2$, and a special grading by $\bZ_2^3$ in which the seven nonzero homogeneous spaces are all Cartan subalgebras.
\end{theorem}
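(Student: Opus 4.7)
The plan is to apply Proposition~\ref{pr:fine_weight} and Theorem~\ref{th:root_grading} and split on the free rank $r$ of the universal group~$G$. Since $\calL$ has rank~$2$, Proposition~\ref{pr:fine_weight}(ii) yields $r=\dim\calL_0\in\{0,1,2\}$.

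For $r=2$, Theorem~\ref{th:root_grading} produces a grading by an irreducible rank-$2$ root system whose grading subalgebra $\frg\subseteq\calL$ has rank~$2$ and must therefore coincide with~$\calL$. Table~\ref{table:root_gradings} leaves only the option $\Phi=G_2$ with trivial coordinate algebra $\calA=\FF$; Proposition~\ref{pr:grading_coordinate_algebra} then forces $\tor(G)=0$, so $G=\bZ^2$ and $\Gamma$ is the Cartan grading.

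For $r=1$, I plan to exclude this case via maximal quasitori: fine gradings on $\calL$ correspond (see~\cite{PZ}) to conjugacy classes of maximal quasitori in $\Aut(\calL)\cong G_2$, and the identity component of the quasitorus attached to $\Gamma$ has dimension~$r$. Suppose $Q\subseteq G_2$ were maximal with $\dim Q^\circ=1$. The centralizer $Z_{G_2}(Q^\circ)$ of the $1$-torus $Q^\circ$ is a connected reductive subgroup of $G_2$ of rank~$2$, so either it is a maximal torus or it is a Levi of semisimple type~$A_1$ (isomorphic to $GL_2$ up to a central finite kernel). In either case every commutative subgroup of semisimple elements of $Z_{G_2}(Q^\circ)$ is conjugate into a maximal $2$-torus $T\subseteq G_2$, so $Q\subseteq T$. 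Then $T\supsetneq Q^\circ$ is a strictly larger quasitorus containing $Q$, contradicting maximality of~$Q$. Hence $r\ne 1$.

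For $r=0$, $\Gamma$ is a special grading by the finite group $G=\tor(G)$ with $\calL_0=0$. Using $\calL\cong\Der(\bO)$ together with the induced algebraic-group isomorphism $\Aut(\calL)\cong\Aut(\bO)$, $\Gamma$ descends to an abelian group grading on the Cayley algebra $\bO$ with neutral component $\FF 1$, and the only such grading, up to equivalence, is the $\bZ_2^3$-grading obtained from the iterated Cayley--Dickson doubling (see \cite{Elduque_octonions,EK}). Thus $G=\bZ_2^3$ and the induced grading on $\calL$ has type $(0,7)$. Each nonzero component $\calL_{\bar g}$ is $2$-dimensional and abelian, since $[\calL_{\bar g},\calL_{\bar g}]\subseteq\calL_{2\bar g}=\calL_0=0$; a direct check in the $\Der(\bO)$ model (or the observation that each $\calL_{\bar g}$ is a common eigenspace of semisimple order-$2$ automorphisms in $\hat G\subseteq\Aut(\calL)$) shows its elements are ad-semisimple, identifying $\calL_{\bar g}$ as a Cartan subalgebra of~$\calL$.

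The main difficulty is the exclusion of $r=1$. At the level of $\frsl_2$-decompositions of $G_2$, a rank-$1$ root-graded shape \emph{does} arise---the subregular $\frsl_2$ gives $G_2\cong 3V(2)\oplus V(4)$, matching the $BC_1$-pattern of~\eqref{eq:BC1_isotypic}---so the obstruction is not purely combinatorial; the maximal-quasitorus argument above is needed to see that any such grading is a proper coarsening of a $2$-torus grading and hence not fine.
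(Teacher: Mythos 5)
Your overall strategy is genuinely different from the paper's and, in outline, viable: the paper eliminates the free-rank-one case by identifying the coordinate structurable algebra from Allison's list as $\Mat_2(\FF)$ with an exotic product and deriving a contradiction with Proposition~\ref{pr:grading_coordinate_algebra} by explicit computation, and it handles the finite case via connectedness of centralizers in the simply connected group $\Aut(\calL)$, Kac's classification of finite-order automorphisms, and Griess's list; you replace both steps by quasitorus-theoretic arguments inside $G_2\cong\Aut(\bO)$. However, as written each of your two replacements has a gap. In the rank-one case, the crux is the claim that every commutative subgroup of semisimple elements of $\Cent_{G_2}(Q^\circ)$ is toral, and your justification ``isomorphic to $GL_2$ up to a central finite kernel'' does not suffice: central quotients such as $GL_2/\{\pm I\}$ do contain nontoral Klein four-groups (images of anticommuting involutions), so the isogeny type matters. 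What you actually need is that the derived group of this rank-one Levi is $SL_2$ rather than $PGL_2$, which does hold because $G_2$ is simply connected (so the Levi is $SL_2\times GL_1$ or $GL_2$, and in either case commuting semisimple elements are simultaneously diagonalizable); state and use this explicitly, since the same argument must fail for $F_4$, where rank-one fine gradings do exist.

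In the finite case, the step ``$\Gamma$ descends to a grading on $\bO$ with neutral component $\FF 1$'' is asserted but not proved. The neutral component of the induced $\bO$-grading is the fixed subalgebra $\bO^Q$, which a priori could be a quadratic or quaternion composition subalgebra (it is a composition subalgebra because the norm is nondegenerate on it, the homogeneous components being orthogonal unless their degrees are opposite). You must rule this out using $\calL_0=0$: if $\bO^Q\supsetneq\FF 1$, then $Q$ lies in the pointwise stabilizer of that subalgebra, which is a copy of $SL_3$ or $SL_2$ in $G_2$; finite abelian semisimple subgroups of these are toral, so $\Cent_{G_2}(Q)$ would have positive dimension, contradicting $\dim\calL_0=0$. (Alternatively, invoke directly the classification of \emph{fine} gradings on $\bO$ from \cite{Elduque_octonions,EK} together with the correspondence of maximal quasitori under $\Aut(\calL)\cong\Aut(\bO)$, which sidesteps the neutral-component claim.) With these two repairs your proof is correct and arguably more uniform than the paper's, trading Allison's structurable-algebra computation and the Kac/Griess input for standard facts about centralizers and torality in $SL_n$; your rank-two case and the verification that the seven nonzero components of the $\bZ_2^3$-grading are Cartan subalgebras are fine as stated.
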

\begin{proof}
Let $\Gamma\,\colon\,\calL=\bigoplus_{g\in G}\calL_g$ be a fine grading of the simple Lie algebra $\calL$ of type $G_2$, with $G$ its universal group. By Theorem \ref{th:root_grading} and Table \ref{table:root_gradings}, either $\Gamma$ is the Cartan grading, or the free rank of $G$ is one, or $G$ is a finite group.

If the free rank is one, then $\calL$ is graded by the root system $BC_1$ (this includes gradings by $A_1$) and hence $\calL$ is given by the Tits-Kantor-Koecher Lie algebra constructed from a structurable algebra and $\Gamma$ is obtained by combining the $\bZ$-grading given by the rank one root system, and a grading of the coordinate algebra as in Proposition \ref{pr:grading_coordinate_algebra}. A look at the possibilities in
\cite[\S 8]{Allison_Models_Isotropic} shows that the coordinate algebra is the structurable algebra $\fra=\Mat_2(\bF)$ with multiplication given by:
\[
\begin{pmatrix} \alpha&\beta\\ \gamma&\delta\end{pmatrix}
\begin{pmatrix} \alpha'&\beta'\\ \gamma'&\delta'\end{pmatrix}
=\begin{pmatrix}\alpha\alpha'+3\beta\gamma'&\alpha\beta'+\beta\delta'+2\gamma\gamma'\\
  \gamma\alpha'+\delta\gamma'+2\beta\beta' &\delta\delta'+3\gamma\beta'
  \end{pmatrix},
\]
and involution
\[
\overline{\begin{pmatrix} \alpha&\beta\\ \gamma&\delta\end{pmatrix}}=
\begin{pmatrix} \delta&\beta\\ \gamma&\alpha\end{pmatrix}.
\]
Consider the basis $\left\{
1=\left(\begin{smallmatrix}1&0\\ 0&1\end{smallmatrix}\right),
e=\left(\begin{smallmatrix}0&1\\ 0&0\end{smallmatrix}\right),
f=\left(\begin{smallmatrix}0&0\\ 1&0\end{smallmatrix}\right),
s=\left(\begin{smallmatrix}1&0\\ 0&-1\end{smallmatrix}\right)\right\}$ of $\fra$, so that $\calA=\espan{1,e,f}$ and $\calB=\bF s$. Since $s^2=1$ and $\calB_0=0$, $s\in \fra_g$ with $2g=0$. The subspace $\bF e+\bF f=\{x\in \fra: sx+xs=0\}$ is graded. But for any nonzero homogeneous element $\alpha e+\beta f$, the elements $s(\alpha e+\beta f)=-\alpha e+\beta f$, $(\alpha e+\beta f)^2=2(\beta^2 e+\alpha^2 f)+3\alpha\beta 1$ and $(\alpha e+\beta f)(-\alpha e+\beta f)=2(\beta^2 e+\alpha^2 f)+3\alpha\beta s$ are homogeneous too, and this forces the nonzero element $\beta^2 e+\alpha^2 f$ to be homogeneous of degree $0$ and $g$ at the same time, a contradiction.

Finally, if $G$ is finite, consider the finite quasitorus $Q$ of the algebraic group $\Aut(\calL)$ which is the image of the character group $\hat G$ (isomorphic to $G$). Since $\Gamma$ is fine, $Q$ is a maximal quasitorus. Also, since $\calL$ is of type $G_2$, $\Aut(\calL)$ is a connected and simply connected semisimple algebraic group. For any $\chi\in Q$, $\chi$ is semisimple and $\Aut(\calL)$ is connected and semisimple, so its centralizer $\Cent_{\Aut(\calL)}(\chi)$ is reductive \cite[Theorem 2.2]{Humphreys_Conjugacy}, and since $\Aut(\calL)$ is simply connected, $\Cent_{\Aut(\calL)}(\chi)$ is connected \cite[Theorem 2.11]{Humphreys_Conjugacy}. Hence the solvable radical coincides with the connected component of its center: $Z\bigl(\Cent_{\Aut(\calL)}(\chi)\bigr)^\circ$, and this is a torus \cite[Lemma 19.5]{Humphreys_LinearAlgebraicGroups}. But $Z\bigl(\Cent_{\Aut(\calL)}(\chi)\bigr)^\circ$ is contained in any maximal quasitorus of $\Cent_{\Aut(\calL)}(\chi)$, so it is contained in $Q$. But $Q$ is finite, so $Z\bigl(\Cent_{\Aut(\calL)}(\chi)\bigr)^\circ=0$, and hence $\Cent_{\Aut(\calL)}(\chi)\bigr)$ is semisimple, so that the subalgebra of $\calL$ of elements fixed by $\chi$ is semisimple.

The automorphisms of finite order of the simple Lie algebras are well-known (see \cite[Chapter 8]{Kac}). They are determined, up to conjugation, by a subset of nodes of the extended Dynkin diagram and some coefficients. Those automorphisms of finite order whose subalgebra of fixed elements is semisimple, correspond to the automorphisms attached to a single node. For $G_2$, the extended Dynkin diagram (with coefficients) is:
\[
\begin{picture}(30,10)
	\put(0,5){\circle{2}}
	\put(10,5){\circle{2}}
	\put(20,5){\circle{2}}
	
	\put(1,5){\line(1,0){8}} 
    \put(11,5){\line(1,0){8}}
	\multiput(10,4)(0,2){2}{\line(1,0){10}} 
	
	\put(-1,1){\small 1}
	\put(9,1){\small 2}
	\put(19,1){\small 3}
	
	\put(14.5,4.2){$\rangle$}
\end{picture}
\]

Therefore, the order of a nontrivial finite order automorphism of $\calL$ whose subalgebra of fixed elements is semisimple is restricted to $2$ or $3$. Thus $Q$ is a maximal nontoral elementary $p$-subgroup of $\Aut(\calL)$, with $p=2$ or $3$. According to \cite{Griess}, there is just one possibility, up to conjugation, where $Q$, and hence $G$, isomorphic to $\bZ_2^3$.
\end{proof}

\begin{theorem}\label{th:F4}
Up to equivalence, the simple Lie algebra $\calL$ of type $F_4$ is endowed with four different fine gradings, whose universal groups and types are as follows:
\begin{itemize}
\item the Cartan grading by $\bZ^4$, of type $(48,0,0,1)$,
\item a grading by $\bZ\times\bZ_2^3$, of type $(31,0,7)$,
\item a grading by $\bZ_2^5$ of type $(24,0,0,7)$, and
\item a grading by $\bZ_3^3$ of type $(0,26)$, such that for any $0\ne \alpha\in\bZ_3^3$, $\calL_\alpha\oplus\calL_{-\alpha}$ is a Cartan subalgebra of $\calL$.
\end{itemize}
\end{theorem}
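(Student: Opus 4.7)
The plan is to parallel the proof of Theorem \ref{th:G2} by case-splitting on the free rank of the universal group $G$. By Theorem \ref{th:root_grading} together with Table \ref{table:root_gradings}, for $\calL$ of type $F_4$ the only irreducible root systems of rank at least $2$ that can appear as $\Phi$ are $F_4$ itself (coordinate algebra $\FF$) and $G_2$ (coordinate algebra the Jordan algebra $H_3(\FF)$). In particular the free rank of $G$ lies in $\{0,1,2,4\}$, and these four values should produce precisely the four fine gradings in the statement.

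If the free rank equals $4$, then $\Phi=F_4$ and $\Gamma$ is the Cartan grading by $\bZ^4$; its type $(48,0,0,1)$ is read off the root space decomposition. If the free rank is $2$, then $\Phi=G_2$ and by Proposition \ref{pr:grading_coordinate_algebra} the fine grading $\Gamma$ is determined by a fine grading of $H_3(\FF)$ whose neutral component is $\FF 1$. I would rule this case out via the quasitorus correspondence of \cite{PZ}: the fine gradings of $H_3(\FF)$ correspond to the maximal quasitori of $\Aut(H_3(\FF))$, an algebraic group of type $A_1$, whose maximal quasitori are either a $1$-dimensional torus or the Klein four group $\bZ_2^2$. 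A direct weight or character computation on the $5$-dimensional irreducible $\Aut(H_3(\FF))$-module $\calJ_0$ (weights $\pm 4,\pm 2,0$ under the torus; $2$-dimensional invariants under the Klein group) shows that in either case the neutral component $H_3(\FF)_0=\FF 1\oplus(\calJ_0)_0$ has dimension at least $2$, contradicting $H_3(\FF)_0=\FF 1$. Hence no fine grading of $\calL$ has free rank $2$ (and none has free rank $3$ by Table \ref{table:root_gradings}).

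If the free rank equals $1$, then $\Phi$ has rank one, the grading subalgebra is $\frg\cong\frsl_2$, and by Proposition \ref{pr:grading_coordinate_algebra} together with the description in Section~3 the coordinate algebra $\fra=\calA\oplus\calB$ is a structurable algebra with $\calA_0=\FF 1$ and $\calB_0=0$. Using the classification of finite-dimensional simple structurable algebras \cite{Allison_Models_Isotropic}, the dimension identity $3\dim\calA+5\dim\calB+\dim\calD=52$, and the constraint that $\calD$ inherits a special grading by $\tor(G)$, one identifies $\fra$ up to isomorphism and shows that its unique fine grading of the required shape has torsion group $\bZ_2^3$. Combined with the $\bZ$-grading coming from the rank one root system, this produces the $\bZ\times\bZ_2^3$-grading, whose type $(31,0,7)$ then follows from a direct dimension count.

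Finally, if $G$ is finite, let $Q=\hat G\subseteq\Aut(\calL)$ be the corresponding finite maximal quasitorus. As in the proof of Theorem \ref{th:G2}, each $\chi\in Q$ is a semisimple element of the simply connected semisimple algebraic group $\Aut(\calL)$ whose centralizer is connected reductive and, because its central torus must lie in $Q$, actually semisimple. The labels $\{1,2,3,4,2\}$ on the extended Dynkin diagram of $F_4$ then force the order of each $\chi\in Q$ to be $2$ or $3$, so $Q$ is a non-toral maximal elementary abelian $p$-subgroup of $\Aut(F_4)$ with $p\in\{2,3\}$. By the classification of such subgroups \cite{Griess}, only $\bZ_2^5$ and $\bZ_3^3$ occur. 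The types $(24,0,0,7)$ and $(0,26)$ are computed directly inside the Tits model $\calT(\bO,\bA)$ using the known fine gradings on $\bO$ and on the Albert algebra $\bA$; the assertion that every nonzero $\calL_\alpha\oplus\calL_{-\alpha}$ in the $\bZ_3^3$-grading is a Cartan subalgebra follows from identifying these two-dimensional pieces as maximal toral subalgebras. The main obstacle is the free rank one case, which requires a careful matching of the structurable-algebra classification with the dimensional and special-grading constraints forced by $\dim F_4=52$.
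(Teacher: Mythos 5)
Your case division (free rank $4$, $2$, $1$, or $0$) is exactly the paper's, and your exclusion of the free-rank-two case via the maximal quasitori of $\Aut(H_3(\FF))\cong\PGL_2$ is a legitimate alternative to the paper's appeal to \cite{BSZ}; the Cartan case is also fine. But the other two cases contain genuine gaps. In the finite case, your assertion that the labels $\{1,2,3,4,2\}$ on the extended Dynkin diagram force every $\chi\in Q$ to have order $2$ or $3$ is false: by Kac's classification, an automorphism of finite order whose fixed subalgebra is semisimple corresponds to a single node and has order equal to that node's coefficient, so for $F_4$ order $4$ does occur (fixed subalgebra $\frsl_4\oplus\frsl_2$). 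Ruling out order-$4$ elements of the maximal finite quasitorus is precisely where the paper spends most of its effort in this case: it identifies $\Cent_{\Aut(\calL)}(\chi)$ with $\SL(V)\times\SL(W)/\langle\pm(I_V,I_W)\rangle$, uses the classification in \cite{E10} of gradings with trivial neutral component on $\frsl(W)$ and $\frsl(V)$, and derives a commutation/anticommutation contradiction. Without that step you cannot conclude that $Q$ is an elementary abelian $2$- or $3$-group, so the appeal to \cite{Griess} and the resulting list $\bZ_2^5$, $\bZ_3^3$ is unsupported.

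In the free-rank-one case, the dimension identity $3\dim\calA+5\dim\calB+\dim\calD=52$ does not identify the coordinate algebra up to isomorphism: Allison's list in \cite[\S 8]{Allison_Models_Isotropic} leaves two candidates, the Cayley algebra $\bO$ with its standard involution ($\dim\calA=1$, $\dim\calB=7$, $\dim\calD=14$) and the $14$-dimensional structurable algebra of $2\times 2$ matrices with scalar diagonal and off-diagonal entries in $H_3(\FF)$ ($\dim\calA=13$, $\dim\calB=1$, $\dim\calD=8$). The first produces the $\bZ\times\bZ_2^3$ grading, but the second must be excluded, and this is the delicate point: the paper shows that $\tor(G)$ would have to be $2$-elementary, compares the induced gradings with trivial neutral component on the simple factors of types $C_3$ and $A_2$ (via \cite{E10}), produces $0\ne h\in\tor(G)$ with $\dim\cA_h\geq 2$, and then uses the nondegenerate trace form of the structurable algebra together with the derivation $D_{x,y}$ of \eqref{eq:Duv} to contradict $\calD_0=0$ from Proposition \ref{pr:grading_coordinate_algebra}. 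Your proposal acknowledges this as ``the main obstacle'' but supplies no argument, so as written neither the uniqueness of the $\bZ\times\bZ_2^3$ grading in this case nor the finite-case classification is actually proved.
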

\begin{proof}
Let $\Gamma\,\colon\,\calL=\bigoplus_{g\in G}\calL_g$ be a fine grading of the simple Lie algebra $\calL$ of type $F_4$, with $G$ its universal group. By Theorem \ref{th:root_grading} and Table \ref{table:root_gradings}, either $\Gamma$ is the Cartan grading, or the free rank of $G$ is two and $\Gamma$ is associated to a grading by the root system of type $G_2$, or the free rank of $G$ is one, or $G$ is a finite group.

If the free rank of $G$ is $2$, the the coordinate algebra (see Table 1) is the Jordan algebra $H_3(\FF)$ of symmetric $3\times 3$-matrices. But the results of \cite{BSZ} show that the neutral component of any grading on $H_3(\FF)$ by any group has dimension at least $3$, and this contradicts Proposition \ref{pr:grading_coordinate_algebra}.

If the free rank of $G$ is one, then $\calL$ is graded by the root system $BC_1$ (this includes gradings by $A_1$) and hence $\calL$ is given by the Tits-Kantor-Koecher Lie algebra constructed from a structurable algebra and $\Gamma$ is obtained by combining the $\bZ$-grading given by the rank one root system, and a grading of the coordinate algebra as in Proposition \ref{pr:grading_coordinate_algebra}. A look at the possibilities in \cite[\S 8]{Allison_Models_Isotropic} shows that the coordinate algebra is either the Cayley algebra $\bO$, with its standard involution, or a structurable algebra defined on the vector space of matrices $\left(\begin{smallmatrix} \alpha&a\\ b&\beta\end{smallmatrix}\right)$, with $\alpha,\beta\in \FF$ and $a,b\in H_3(\FF)$.

For the Cayley algebra, there is a unique grading, up to equivalence, whose neutral component is $\FF 1$, with universal group $\bZ_2^3$, thus obtaining the grading by $\bZ\times\bZ_2^3$.

In the second case, the coordinate algebra $\fra=\cA\oplus\cB$ has dimension $14$, with $\dim\cB=1$. Moreover, $\cB=\FF s$ for an element $s$ with $s^2=1$, and hence $\cB=\cB_g$ for an element $0\ne g\in \tor(G)$ with $2g=0$. The Lie algebra $\calL$ decomposes as in \eqref{eq:BC1_isotypic}, and the neutral component of the associated grading by the root system of type $BC_1$ decomposes as:
\begin{equation}\label{eq:F4_BC1}
\calL(0)\simeq \cA\oplus\cB\oplus\cD\simeq \fra\oplus\calD.
\end{equation}
This is a reductive Lie algebra with one-dimensional center (corresponding to $\FF 1$) and derived subalgebra simple of type $C_3$. (Actually $\calL(0)$ is isomorphic to the structure Lie algebra $\Str(\fra,-)$, see \cite[\S 1]{Allison_Models_Isotropic}.) On the other hand, $\calD$ is isomorphic to the Lie algebra of derivations of $\fra$, which is simple of type $A_2$. The results in \cite{E10} show that the simple Lie algebra of type $C_3$ is endowed with a unique grading with trivial neutral component, with universal group $\bZ_2^4$ and type $(12,0,3)$. On the other hand, the simple Lie algebra of type $A_2$ has a unique grading, up to equivalence, with trivial neutral component and whose universal group is $2$-elementary. Its type is $(6,1)$. It turns out that $\tor(G)$ is $2$-elementary and that at least two of the three homogeneous components of $[\calL(0),\calL(0)]$ of dimension $3$ intersect the graded subspace $\fra$ in \eqref{eq:F4_BC1} with dimension $\geq 2$. We conclude that there is an element $0\ne h\in\tor(G)$ such that $\dim \cA_h\geq 2$, and $h\ne g$ (recall $\cB=\cB_g$). Since $\fra$ is a simple structurable algebra, the form $\langle x,y\rangle=\trace(L_{x\bar y+y\bar x})$ is nondegenerate \cite{AllisonSchafer}. But $\langle \fra_{g_1},\fra_{g_2}\rangle=0$ unless $g_1+g_2=0$. Therefore, the restriction of this form to $\cA_h$ is nondegenerate. Now, for any two elements $x,y\in\cA_h$, $xy\in \fra_0=\FF1$, so $xy=\alpha 1=\overline{xy}=\bar y\bar x=yx$ and $\langle x,y\rangle=\trace(L_{2\alpha 1})=2\alpha\dim\fra$. We may then find elements $x,y\in \cA_h$ with $x^2=0=y^2$ and $xy=1$. But then the derivation $D_{x,y}$ in \eqref{eq:Duv} satisfies $D_{x,y}(x)\ne 0$, so $0\ne D_{x,y}\in\calD_0$, a contradiction with $\calD_0=0$.

We are left with the case in which $G$ is finite. As in the proof of Theorem \ref{th:G2}, we consider the extended Dynkin diagram:

\[
\begin{picture}(50,10)
	\put(0,5){\circle{2}}
	\put(10,5){\circle{2}}
	\put(20,5){\circle{2}}
    \put(30,5){\circle{2}}
    \put(40,5){\circle{2}}
	
    \put(1,5){\line(1,0){8}}
    \put(11,5){\line(1,0){8}}
    \put(31,5){\line(1,0){8}} 
	\multiput(20,4)(0,2){2}{\line(1,0){10}} 
	
	\put(-1,1){\small 1}
	\put(9,1){\small 2}
	\put(19,1){\small 3}
    \put(29,1){\small 4}
    \put(39,1){\small 2}
	
	\put(24.5,4.2){$\rangle$}
\end{picture}
\]
and check that either $G$ is an elementary $2$-group or $3$-group, or the associated quasitorus $Q$($\simeq \hat G$) contains an automorphism $\chi$ of order $4$.

In the latter case, the subalgebra of elements fixed by $\chi$ is isomorphic to $\frsl(V)\oplus\frsl(W)$ with $\dim V=4$, $\dim W=2$ (see \cite[Chapter 8]{Kac}) and the other eigenspaces of $\chi$ are, as modules for $\frsl(V)\oplus\frsl(W)$, isomorphic to $V\otimes W$, $\wedge^2V\otimes S^2W$ and $V^*\otimes W$, with respective eigenvalues $\sqrt{-1},-1,-\sqrt{-1}$. The action of any automorphism in the connected subgroup $\Cent_{\Aut(\calL)}(\chi)$ is determined by its restriction to $V\otimes W$. It is not difficult to check now that $\Cent_{\Aut(\calL)}(\chi)$ is isomorphic to $SL(V)\times SL(W)/\langle \pm(I_V,I_W)\rangle$ ($I_X$ denotes the identity map on the vector space $X$). For $f\in SL(V)$ and $g\in SL(W)$, denote by $\psi_{f,g}$ the automorphisms of $\calL$ such that $\psi_{(f,g)}\vert_{V\otimes W}=f\otimes g$. Moreover, $\Gamma$ induces gradings on $\frsl(V)$ and on $\frsl(W)$ with trivial neutral components, induced by the projections $\pi_V\colon SL(V)\times SL(W)/\{\pm(I_V,I_W)\}\rightarrow PSL(V)=SL(V)/\langle\sqrt{-1}I_V\rangle$ (contained, up to isomorphism, in $\Aut(\frsl(V))$), and $\pi_W\colon SL(V)\times SL(W)/\{\pm(I_V,I_W)\}\rightarrow PSL(W)=SL(W)/\{\pm I_W\}$.

There is \cite{E10}, up to equivalence, only one one possibility for such grading on $\frsl(W)$, where $\pi_W(Q)=\langle \bar g_1,\bar g_2\rangle$, with $g_1,g_2\in SL(W)$ or order $2$, $g_1g_2=-g_2g_1$ and $\bar g_i$ denotes the class of $g_i$ in $PSL(W)$. With $g_0=I_W$, $g_3=g_1g_2$, and $Q_V^i=\{f\in SL(V): \psi_{f,g_i}\in Q\}$ we have $Q=\cup_{i=0}^3\psi_{Q_V^i,g_i}$. Since $Q$ is abelian, $\psi_{f,g}\psi_{f,g'}=\psi_{f',g'}\psi_{f,g}$, and it follows from $g_1g_2=-g_2g_1$, that the elements of $Q_V^i$ anticommute with the elements of $Q_V^j$ for $1\leq i\ne j\leq 3$, and that the elements of $Q_V^0$ commute with the elements in any $Q_V^i$.

Now, there are \cite{E10}, up to equivalence, only two possibilities of gradings on $\frsl(V)$ whose associated quasitorus is contained in $PSL(V)$ and whose neutral component is trivial. In the first of this possibilities, $\pi_V(Q)=\langle \bar f_1,\bar f_2\rangle$ with $f_1f_2=\sqrt{-1}f_2f_1$ but since  any two elements of $\pi_V(Q)$ must either commute or anticommute by the above, this is not possible. In the other possibility $\pi_V(Q)=\langle \bar f_1,\bar f_2,\bar f_1',\bar f_2'\rangle$, with $f_1,f_2,f_1',f_2'$ order two elements of $SL(V)$ such that $f_1f_2=-f_2f_1$, $f_1'f_2'=-f_2'f_1'$ and $f_if_j'=f_j'f_i$ for any $i,j=1,2$. We may assume, scaling the elements if necessary, that $f_1\in Q_V^1$ and $f_2\in Q_V^2$. But then, up to scalars, $f_1'$ and $f_2'$ must belong to $Q_V^0$, since they commute with both $f_1$ and $f_2$. This is a contradiction, since $f_1'$ and $f_2'$ anticommute.

We conclude that, if $G$ is finite, the maximal quasitorus cannot contain automorphisms of order $4$, and hence $G$ is an elementary $2$ or $3$-group, and the results in \cite{Griess} prove that either $G\cong \bZ_2^5$ or $G\cong \bZ_3^3$. The description of the gradings (with the exception of the Cartan grading) and their types appear, for instance, in \cite{Elduque_SymCom}.
\end{proof}


\bigskip


\end{document}